\newtheorem{theorem}{\noindent Theorem}
\newtheorem{lemma}{\noindent Lemma}
\newtheorem{definition}{\noindent Definition}
\newtheorem{corollary}{\noindent Corollary}
\newtheorem{conjecture}{\noindent Conjecture}
\newtheorem{statement}{\noindent Proposition}
\date{}
\title{\textbf{Scaling entropy and automorphisms with purely point
spectrum\footnote{Partially supported by the grants
RFBR-08-01-00379-a and RFBR-09-01-12175-ofi-m.}}}
\author{A.~M.~Vershik}
\begin{document}
\maketitle
 \rightline{\it \textbf{To the memory of my friend Misha Birman}}

\begin{abstract}
We study the dynamics of metrics generated by measure-preserving
transformations. We consider sequences of average metrics and
$\epsilon$-entropies of the measure with respect to these metrics.
The main result, which gives a criterion for checking that the
spectrum of a transformation if purely point, is that the {\it scaling
sequence for the $\epsilon$-entropies with respect to the averages of
an admissible metric is bounded if and only if the automorphism has a
purely point spectrum}. This paper is one of a series of papers by the
author devoted to the asymptotic theory of sequences of metric measure
spaces and its applications to ergodic theory.
\end{abstract}

\tableofcontents

\section{Introduction}
Among the many mathematical and nonmathematical problems we have been
discussing
with Misha Birman for many years after our acquaintance began in the
early 1960s, the most intriguing one was the parallel between scattering
theory and ergodic theory. Recently, I have returned to the
(yet nonexistent) ``ergodic scattering theory'' and some forgotten
questions related to it. However, this paper deals with another
subject, which
also correlates with M.~Sh.~Birman's research.

It is well known that the problem of deciding whether or not the spectrum of
a given (say, differential) operator is discrete is quite difficult.
Several remarkable early papers by M.~Sh.~Birman (in the first place, \cite{MB})
dealt with exactly this
problem. In ergodic theory and the theory of dynamical systems, this
problem (whether or not the system of eigenfunctions is complete) is
also very difficult. In what follows, we use the terminology common
in ergodic theory that slightly differs from the one adopted
in operator theory: we use the term ``discrete spectrum'' as a synonym
for ``pure point spectrum.'' This is justified by the fact that
a discrete spectrum in the sense of operator theory almost never appears in the
theory of dynamical systems, since almost always one deals with unitary
operators.

As an example demonstrating the difficulty of this problem, we can
mention the theory of substitutions, or
stationary adic transformations
\cite{V1}, on which an extensive literature exists. The most
intriguing problems concern
nonstationary adic transformations with subexponential
growth of the number of vertices in the corresponding
Bratteli--Vershik diagram (see \cite{V1}). The simplest and most popular
example of such a transformation is the Pascal automorphism defined in \cite{V2}; in this case,
the measure space (i.\,e., the phase space of the dynamical system)
is the space of infinite
paths in the Pascal graph endowed with a Bernoulli measure, and the
transformation sends a path to its successor in the natural
lexicographic order. In spite of many efforts, we still do not know
whether the corresponding unitary operator has a discrete (i.\,e.,
pure point) or mixed spectrum, or, which seems most likely, its spectrum is
purely continuous. Attempts to directly construct its eigenfunctions failed;
another approach, based on the characterization of systems with
discrete spectrum in terms of
Kushnirenko's sequential entropy,
\cite{Ku} has not been carried out.

In the paper \cite{V6} (for more details, see \cite{V3,V4,VG}), we
suggested a new notion, the so-called
\emph{scaling entropy}, which generalizes the notion of Kolmogorov's entropy. The
main point is that we suggest to
{\it average the shifts of the metric} with respect to a
given transformation and then compute the
 {\it $\varepsilon$-entropy} of the average metric.
The class of increasing sequences of positive integers that
normalize the growth of this
$\varepsilon$-entropy over all admissible metrics does not depend on
the choice of an admissible metric, so that the asymptotics of the
growth of these sequences is a new metric invariant of automorphisms. Admissible metrics
are measurable metrics satisfying some conditions that do or do
not depend on the automorphism (see Section~2). It is important that
{\it we consider not merely the $\varepsilon$-entropy of a metric, but
the $\varepsilon$-entropy of a metric in a measure space}. Admissible
metrics play the same role as measurable generating partitions in the
classical theory of Kolmogorov's entropy according to Sinai's definition.
At first sight, the difference between using partitions and metrics
looks rather technical: a partition determines a semimetric of a
special form (a so-called ``cut semimetric,''
see \cite{D}). However,
our approach has two important differences from the classical theory. First,
we use the $\varepsilon$-entropy of the iterated metric on a measure
space rather than the entropy of a partition; this is a
generalization of Kolmogorov's entropy, which allows one to
distinguish automorphisms with zero entropy. The second, more
important, difference is that we use the average metric (rather than the
supremum of metrics, which corresponds to the supremum of partitions),
which has no interpretation in terms of partitions and
which contains more information about the automorphism than the
supremum of metrics.

In this paper, we give a necessary and sufficient condition for an
automorphism to have a discrete spectrum in terms of the scaling
sequence. The condition is that this sequence is bounded. This result
generalizes a theorem of S.~Ferenczi
\cite{Fe,FeP}, who considered the measure-theoretic complexity of
symbolic systems by analogy with the ordinary complexity in symbolic
dynamics. Our approach substantially differs from that of
\cite{Fe}: we consider an arbitrary admissible metric rather than
the Hamming metric only, and, which is most important, introduce the average
metric and show that it is admissible in many cases
and, in particular, for the Hamming metric.

Thus criterion, i.e., the boundedness of the
growth of the scaling entropy, should first be applied to adic transformations,
e.g., to the Pascal automorphism (see \cite{V3}). Although it
is not yet applied to checking that the spectrum of the
Pascal automorphism is not discrete, \footnote{See the footnote in the subsection 7.2},
the corresponding combinatorics
is already developed and described in the recent paper
\cite{MM}, where a lower bound is obtained on the scaling sequence of the sup
metric for the same  Pascal automorphism. Supposedly, one can extend
this bound ($\ln n$) to the average metric using the same techniques.

A wider context is presented in our papers
\cite{V3,V4}, where we suggest a plan for the study of the {\it dynamics
of metrics in a measure space} as a source of new invariants of
automorphisms. It is important that the notion of scaling entropy provides an
answer to the question of whether or not the spectrum of a given
transformation is discrete.

Sections 2--4 are of general nature and are intended not only for the
purposes of this paper, which is devoted mostly to automorphisms with
discrete spectrum. Here we introduce our main objects: admissible
metrics, averages, scaling sequences, and the scaling entropy of an
automorphism.  In Section~5, we study the dynamics of metrics on a group
and find conditions under which the average metric is admissible. The main
result is given in Section~6, where we present a criterion for
checking whether the spectrum of an automorphism is completely
or partially discrete. In the last section, we
sketch possible applications, links to the ordinary construction of
entropy, and general remarks about the dynamics of metrics.

\section{Admissible metrics}

We consider various metrics and semimetrics on a measure space $(X,{\frak A},\mu)$.
In what follows, it is assumed to
be a standard space with continuous measure
$\mu$ and $\sigma$-algebra $\frak A$ of $\bmod 0$ classes of measurable
sets, i.e., a Lebesgue space with continuous measure in the sense of Rokhlin
(see \cite{Ro}). The space $X\times X$ is endowed with the $\sigma$-algebra
$\frak A \times \frak A$ and the measure $\mu \times \mu$.

We define a class of (semi)metrics on a measure space, which plays an
important role in what follows.

\begin{definition}
A measurable function
$\rho:X\times X\rightarrow {\Bbb R}_+$ is called an admissible
(semi)metric if

{\rm 1)} $\rho$ is a (semi)metric in the ordinary sense on a subset
$X'\subset X$ of full measure ($\mu X'=1$), i.e.,
  $\rho(x,y)\geq 0$, $\rho(x,y)=\rho(y,x)$,
  $\rho(x,y)+\rho(y,z)\geq \rho(x,z)$ for all triples $(x,y,z)\in X'\times X' \times X'$,
$(\mu\times\mu)\{(x,x),x\in X'\}=0$, and
$$\int_X\int_X \rho(x,y)d\mu(x)d\mu(y)<\infty.$$

In order to formulate the next condition, observe that if
$\rho$ satisfies Condition~1), then the partition
$\psi_{\rho}$ of the space $X$ into the classes of points
$C_x=\{y:\rho(x,y)=0\}$, $x \in X$, is measurable. Hence
we have a well-defined quotient space
$X_\rho \equiv X/{\psi_{\rho}}$ endowed with the quotient metric denoted by the same
letter $\rho$ and the quotient measure
$\mu_{\psi_{\rho}}\equiv\mu_{\rho}$.
For the quotient space $(X_\rho,\rho,\mu_{\rho})$, Condition~1) is
still satisfied.

{\rm 2)} The (completion of the) metric space $(X,\rho)$ (if $\rho$ is a metric) or
$(X_{\rho},\rho)$ (if $\rho$ on $X$ is a semimetric) is
a Polish (= metric, separable, complete) space
with a Borel probability measure
$\mu$ (respectively, $\mu_{\rho}$).
\end{definition}

Following the measure-theoretic tradition, we must identify
(semi)metrics (and hence the corresponding spaces) if they coincide
almost everywhere as measurable functions on the space
$(X\times X,\mu \times \mu)$. Of course, a (semi)metric that coincides
almost everywhere with an admissible (semi)metric is admissible.\footnote{We can define the notion of {\it almost metric} as
a measurable function for which all axioms on metric satisfy for almost all pairs or triples points.
As F.Petrov noticed for each almost metric there exists the admissible metric in our sense which almost everywhere coincided with it.}

Condition {\rm 2)} means that the $\sigma$-algebra of Borel sets in
the metric space
$(X, \rho)$ (or $(X_{\rho}, \rho)$ in the case of a semimetric) is
dense in the $\sigma$-algebra $\frak A$ of all measurable sets
and, therefore, the measure
$\mu$ (respectively, $\mu_{\rho}$) is a Borel probability measure. It
is obvious from the definition that a semimetric
$\rho$ is admissible if and only if the corresponding metric in the
quotient space $X_{\rho}$ is admissible.

An equivalent definition of an admissible metric is as follows: almost every pair
of points can be separated by balls of positive measure, or, in other
words, the
$\sigma$-subalgebra generated by the open balls  $\bmod 0$
separates points of the
space.

One can also formulate the admissibility condition in terms of the
notion of a pure function from our paper
\cite{VCl}:

\begin{lemma} A metric $\rho$ is admissible if and only if it
satisfies Condition~1) and, regarded as a function of two variables,
is pure in the sense of \cite{VCl}; the latter means that  the partition of the space
 on the classes of equivalence $x\sim y \Leftrightarrow  \mu\{z: \rho(x,z)= \rho(y,z)\}=1$
is the partition on the separate points $mod 0$. In other words: the map $x\mapsto \rho(x,.)$
is injective from measure space $(X,\mu)$ to the classes of $mod 0$ equal functions of one variable.
\footnote{In other words, almost every point is uniquely determined by
the collection of distances from this point to all points of some
 set of full measure (which may depend on the point).}

If $\rho$ is a semimetric, then this condition must hold for the
metric $\rho$ on the quotient space
$(X_{\rho},\rho,\mu_{\rho})$.
\end{lemma}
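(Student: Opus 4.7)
I prove the two implications separately.

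\emph{Admissibility implies purity.} Condition~1) is already built into admissibility, so it suffices to verify purity. By Condition~2), $(X,\rho)$ is a Polish metric space and $\mu$ is a Borel probability measure on it; in particular its topological support $S=\mathrm{supp}(\mu)$ satisfies $\mu(S)=1$. For each fixed $x$, the function $z\mapsto\rho(x,z)$ is $1$-Lipschitz and hence continuous. Suppose $\rho(x,\cdot)=\rho(y,\cdot)$ $\mu$-a.e.; the agreement set has full measure, so it is dense in $S$, and by continuity the two functions coincide on all of $S$. Since $\mu$-a.e.\ $x$ lies in $S$, plugging $z=x$ into this identity yields $\rho(y,x)=\rho(x,x)=0$ and hence $y=x$. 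The equivalence relation defining purity is therefore the identity mod~$0$.

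\emph{Condition~1) and purity imply admissibility.} I verify the equivalent formulation recorded in the text: the $\sigma$-subalgebra generated mod~$0$ by the open $\rho$-balls separates the points of $X$. Consider the map $\Phi\colon X\to L^1(X,\mu)$, $\Phi(x)=\rho(x,\cdot)$. Condition~1) together with Fubini makes $\Phi$ well-defined $\mu$-a.e.\ and Borel measurable, and purity is by definition the statement that $\Phi$ is injective mod~$0$. The $\sigma$-algebra on $X$ generated by the balls $B(z,r)=\{w:\rho(w,z)<r\}$, $z\in X$, $r>0$, coincides mod~$0$ with the $\sigma$-algebra generated by the measurable functions $\rho(\cdot,z)$, which in turn is the pull-back under $\Phi$ of the Borel $\sigma$-algebra of $L^1$. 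Because $\Phi$ is injective mod~$0$, this pull-back exhausts $\mathfrak A$ mod~$0$, so the $\rho$-balls generate $\mathfrak A$ mod~$0$ and in particular separate points mod~$0$.

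\emph{Main obstacle.} The delicate step is matching the ``balls of positive measure'' formulation of admissibility with the ``all balls'' $\sigma$-algebra argument above: one must observe that mod~$0$ a null ball is indistinguishable from the empty set, so that the $\sigma$-algebra mod~$0$ generated by positive-measure balls coincides with the $\sigma$-algebra mod~$0$ generated by all balls. Granted this, the lemma is essentially an unpacking of definitions via the theory of pure functions of~\cite{VCl}, combined with the equivalence noted in the text between the Polish/Borel-measure definition of admissibility and the ball-separation property. The semimetric case follows by passage to the quotient $(X_\rho,\rho,\mu_\rho)$, since both purity and admissibility are designed to descend to this quotient.
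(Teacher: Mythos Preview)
Your argument for admissibility $\Rightarrow$ purity is correct and supplies the details the paper omits (it simply says ``the converse immediately follows from the definition''): you exploit that $z\mapsto\rho(x,z)$ is $1$-Lipschitz, hence continuous, and pass from a.e.\ agreement of $\rho(x,\cdot)$ and $\rho(y,\cdot)$ to agreement on the support, then evaluate at $z=x$.

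The gap is in the other direction. The assertion that the $\sigma$-algebra generated by the functions $\rho(\cdot,z)$ ``is the pull-back under $\Phi$ of the Borel $\sigma$-algebra of $L^1$'' is unjustified, and in fact neither inclusion is clear. Point evaluation $f\mapsto f(z)$ is not well-defined on $L^1$, so $\rho(\cdot,z)$ need not factor through $\Phi$, and you do not get $\sigma(\{\rho(\cdot,z)\})\subset\Phi^{-1}(\mathrm{Borel}(L^1))$. Conversely, to show $\Phi$ is measurable with respect to $\sigma(\{\rho(\cdot,z)\})$ you would need $x\mapsto\int|\rho(x,z)-g(z)|\,d\mu(z)$ to be $\sigma(\{\rho(\cdot,z)\})$-measurable; each integrand is, but integrating an uncountable family of $\mathcal B$-measurable functions over the index need not yield a $\mathcal B$-measurable result when $\mathcal B$ is a proper sub-$\sigma$-algebra. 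Your conclusion $\Phi^{-1}(\mathrm{Borel}(L^1))=\mathfrak A$ mod $0$ (from injectivity and standard Borel theory) is fine, but it does not place $\mathfrak A$ inside the ball $\sigma$-algebra.

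The paper's route avoids this detour entirely. Purity says that for a.e.\ pair $x\ne y$ the set $\{z:\rho(x,z)\ne\rho(y,z)\}$ has positive measure, hence is nonempty, so some ball $B(z,r)$ separates $x$ from $y$; thus the family of balls separates points mod $0$, and one then invokes the Rokhlin-type fact that in a Lebesgue space a point-separating family of measurable sets generates $\mathfrak A$ mod $0$ (whence also separability). You can repair your argument by replacing the $L^1$ identification with this direct observation.
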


Indeed, the purity condition implies that the $\sigma$-algebra of sets
generated by the balls separates points and hence is dense in the
$\sigma$-algebra $(X,{\frak A},\mu)$; this also implies the
separability. The converse immediately follows from the
definition of an admissible metric.

It is well known (see \cite{Ro}) that if $(X,\rho)$ is a Polish space,
then every nondegenerate Borel probability measure
$\mu$ on $X$ turns $(X,\rho)$ into a Lebesgue space. In other words,
the metric $\rho$ on a Polish space $(X,\rho)$ endowed with a Borel
probability measure $\mu$ is an admissible metric on the space
$(X,\mu)$.

As in other our papers, in the definition of an admissible metric we reverse
the tradition and {\it consider various metrics and semimetrics on a fixed measure
space rather than various measures on a given metric
space}. Recall that triples
$(X,\rho, \mu)$, consisting of a metric space endowed with a measure,
in  M.~Gromov's book \cite{Gr} were called $mm$-spaces, and in the paper
\cite{VU}, metric triples or Gromov triples.

It is useful to regard admissible metrics as
{\it densities of some finite measures equivalent to the measure
$\mu\times \mu$ on $X\times X$}:
$$dM_{\mu,\rho}=\rho(x,y)d\mu(x)d\mu(y).$$

If we set $\int_{X\times X} \rho(x,y)d\mu(x)d\mu(y)=1$,
which can be done by normalizing the metric, then the new measure is
also a probability measure. Observe the following important property
of admissible metrics implied by this interpretation.

\begin{theorem}
For almost every point $x\in X$, there is a uniquely defined $\bmod 0$
conditional measure $\mu^x$ on $X$, which is given by the formula $d\mu^x(A)=\int_A\rho(x,y)d\mu(y)$
for every measurable set $A\subset X$.
The family of measures $\{\mu^x; x\in X\}$ satisfies the condition
$$\mu(A)=\int_X \mu^x(A)d\mu(x).$$ The metric $\rho$, regarded as a
metric on the measure space $(X,\mu^x)$, is admissible.
\end{theorem}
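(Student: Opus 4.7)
The plan is to apply Rokhlin's theorem on the disintegration of measures. Consider the probability measure $M=M_{\mu,\rho}$ on the Lebesgue space $(X\times X,\mu\times\mu)$ given by $dM=\rho\,d(\mu\times\mu)$. After normalizing $\rho$ so that $\int_X\rho(x,y)\,d\mu(y)=1$ for $\mu$-a.e.\ $x$ (equivalently, so that the first marginal of $M$ is $\mu$), I disintegrate $M$ along the measurable partition of $X\times X$ into vertical fibers $\{x\}\times X$. This produces conditional probability measures $\mu^x$ for $\mu$-a.e.\ $x$, uniquely defined $\bmod 0$.

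To identify these conditional measures with the measures of density $\rho(x,\cdot)$, I compute in two ways for an arbitrary measurable rectangle $B\times A$:
\[
\int_B \mu^x(A)\,d\mu(x) \;=\; M(B\times A) \;=\; \int_B\!\int_A \rho(x,y)\,d\mu(y)\,d\mu(x),
\]
where the first equality is the defining property of the disintegration and the second is Fubini's theorem. Uniqueness of the Radon--Nikodym derivative then gives $\mu^x(A)=\int_A\rho(x,y)\,d\mu(y)$ for $\mu$-a.e.\ $x$, and the consistency formula $\mu(A)=\int_X\mu^x(A)\,d\mu(x)$ is just the statement that the second marginal of $M$ is $\mu$, which follows by the symmetry $\rho(x,y)=\rho(y,x)$ together with the normalization above.

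For the admissibility of $\rho$ on $(X,\mu^x)$ I verify the two conditions of Definition~1 separately. Condition~1 transfers by absolute continuity $\mu^x\ll\mu$: the $(\mu\times\mu)$-null set on which the semimetric axioms can fail is also $(\mu^x\times\mu^x)$-null, and finiteness of $\iint\rho(y,z)\,d\mu^x(y)\,d\mu^x(z)=\iint\rho(y,z)\rho(x,y)\rho(x,z)\,d\mu(y)\,d\mu(z)$ holds for $\mu$-a.e.\ $x$ by Fubini applied to the corresponding triple integral over $X^3$. Condition~2 is free, because the underlying Polish topology is unchanged and $\mu^x$, being absolutely continuous with respect to the Borel probability measure $\mu$, is itself a Borel probability measure supported on the same separable complete space.

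The main obstacle is precisely the integrability step for $\mu^x$: from $\iint\rho\,d\mu\,d\mu<\infty$ alone, the triple integral $\iiint\rho(y,z)\rho(x,y)\rho(x,z)$ need not be finite. I would handle this by first establishing the theorem for bounded admissible $\rho$ (where Fubini applies unconditionally), then extending to the general case by a monotone truncation $\rho_N=\rho\wedge N\nearrow\rho$, showing that the corresponding conditional measures $\mu^x_N$ converge to $\mu^x$ on a set of full $\mu$-measure and that admissibility is preserved in the limit. The remaining assertions are essentially automatic once this uniform control is in hand.
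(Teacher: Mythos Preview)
Your disintegration argument is exactly the paper's: form $M_{\mu,\rho}$ on $X\times X$ and apply Rokhlin's theorem to the measurable partition into vertical fibers $C^x=\{(x,*)\}$. The paper's proof is two sentences; your computation with rectangles and Radon--Nikodym uniqueness simply fills in what the paper leaves implicit, but the route is identical.

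Where you diverge is on the admissibility of $\rho$ with respect to $\mu^x$. The paper dispatches this in a single clause: ``the admissibility of a metric is obviously preserved under replacing a measure with an equivalent one,'' using that $\mu^x$ and $\mu$ are mutually absolutely continuous (since $\rho(x,\cdot)>0$ a.e.\ when $\rho$ is a genuine metric on a continuous space). You instead unpack Definition~1 and correctly isolate the one place where equivalence of measures is \emph{not} enough: the integrability clause $\iint\rho\,d\mu^x\,d\mu^x<\infty$ rewrites as $\iint\rho(y,z)\rho(x,y)\rho(x,z)\,d\mu(y)\,d\mu(z)$, and finiteness of this for a.e.\ $x$ does not follow from $\rho\in L^1(\mu\times\mu)$ alone. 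Your objection is sound; the paper's ``obviously'' glosses over precisely this point. Your proposed truncation $\rho\wedge N$ handles the bounded case cleanly, but the passage to the limit (getting the a.e.-in-$x$ finiteness to survive) is asserted rather than proved, and is not automatic. The paper simply does not engage with this; note that in all later applications only bounded admissible metrics are actually used, where the issue evaporates.

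One small correction: your normalization ``$\int_X\rho(x,y)\,d\mu(y)=1$ for a.e.\ $x$'' cannot be achieved by rescaling $\rho$ while keeping it a symmetric metric (the required factor depends on $x$). The paper only normalizes the total integral $\iint\rho\,d\mu\,d\mu=1$, and with that weaker normalization the displayed consistency formula should really read $M_{\mu,\rho}(X\times A)=\int_X\mu^x(A)\,d\mu(x)$ rather than $\mu(A)$ on the left; this is a blemish in the statement itself, not in your argument.
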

\begin{proof}
Consider the new measure $M_{\mu,\rho}$ on the space $X\times X$
and the measurable partition into the classes of points
$C^x=\{(x,*)\}\subset X\times X$, and use the classical theorem on the
existence of conditional measures (see, e.g., \cite{Ro}), which implies
the desired formula and the uniqueness $\bmod 0$ of the family of
conditional measures. Now consider the space
 $(X,\rho,\mu^x)$ for a fixed $x$; the metric $\rho$
on this space is admissible since the
admissibility of a metric is obviously preserved under replacing a
measure with an
equivalent one.
\end{proof}

The conditional measure $\mu^x(A)$ can be interpreted as the ``average
distance,'' or the conditional expectation of the distance from the set
$A$ to the point $x$.

In these terms, the condition that the metric, regarded as a function of two variables, is pure
means that the conditional
expectations do not coincide $\mod 0$ for almost all pairs of points.

Now we can give a convenient criterion of admissibility.

\begin{statement}
A measurable function $\rho(\cdot,\cdot)$ satisfying Condition~1) of
Definition~1 is admissible if and only if the following non-degeneracy
condition holds:

for $\mu$- almost all $x$, the measure of arbitrary balls of positive radius is positive:
$\forall \varepsilon >0, \quad \mu\{y:\rho(x,y)<\epsilon\}>0$, which is equivalent to: $$\mu^x([0,\epsilon])>0$$
for almost all  $x$ and all positive $\epsilon$.
\end{statement}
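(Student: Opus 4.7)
My plan is to handle the two implications separately, relying on Lemma~1 to avoid re-verifying the Polish structure in the harder direction.

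\textbf{Necessity.} Suppose $\rho$ is admissible. By Condition~2, the completion of $(X,\rho)$ (or $(X_\rho,\rho)$ in the semimetric case) is Polish and carries $\mu$ (resp.\ $\mu_\rho$) as a Borel probability measure. I will invoke the standard fact that every Borel probability on a separable metric space is supported on the set of points with positive measure in every open neighborhood: the complement of the support is an open set each of whose points admits a null neighborhood, and separability lets one cover it by countably many null balls, so it is itself null. This yields $\mu(B(x,\varepsilon)) > 0$ for all $\varepsilon > 0$ at $\mu$-a.e.\ $x$. The equivalent reformulation $\mu^x([0,\varepsilon]) > 0$ is immediate from Theorem~1, since $\int_{\rho(x,y) < \varepsilon} \rho(x,y)\,d\mu(y) > 0$ iff $\mu\{y : \rho(x,y) < \varepsilon\} > 0$.

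\textbf{Sufficiency.} Assume Condition~1 together with the non-degeneracy condition. By Lemma~1 it suffices to establish purity, i.e.\ that the map $x \mapsto \rho(x,\cdot)$ is injective mod $0$ (after passing to $X_\rho$ in the semimetric case). Suppose otherwise: on a positive $\mu\times\mu$-measure set of pairs $(x,y) \in X' \times X'$ one has $x \neq y$ (so $\rho(x,y) > 0$) and $\rho(x,z) = \rho(y,z)$ for $\mu$-a.e.\ $z$. Since non-degeneracy holds for $\mu$-a.e.\ $x$, I can select such a pair with $x$ non-degenerate; set $\delta = \rho(x,y)/3 > 0$. The ball $A = \{z \in X' : \rho(x,z) < \delta\}$ has positive measure, and because $\rho(x,z) = \rho(y,z)$ for a.e.\ $z$, the inequality $\rho(y,z) < \delta$ also holds on a positive-measure subset of $A$. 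Picking $z$ there, the triangle inequality (valid on $X' \times X' \times X'$ by Condition~1) forces $\rho(x,y) \leq \rho(x,z) + \rho(y,z) < \tfrac{2}{3}\rho(x,y)$, a contradiction. Hence $\rho$ is pure and Lemma~1 delivers admissibility.

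The main obstacle is the mod-$0$ bookkeeping in the sufficiency direction: one must guarantee that a single pair $(x,y)$ can be chosen to lie simultaneously in $X' \times X'$, to satisfy the non-degeneracy assumption at $x$, and to witness the equality $\rho(x,\cdot) = \rho(y,\cdot)$ $\mu$-almost everywhere. Each of these constraints defines a full-measure set, so whenever purity fails their intersection is still of positive measure, and the triangle inequality then yields the contradiction on a further positive-measure set of witnesses $z$. No further separability or topological input is required beyond what Condition~1 and Lemma~1 already provide.
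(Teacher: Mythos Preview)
Your necessity argument (support of a Borel probability on a separable metric space) is standard and matches in spirit what the paper means by ``observed above.'' For sufficiency, however, you take a different route from the paper: the paper argues directly that non-degeneracy forces separability, by showing that for each $\varepsilon>0$ there is a set of measure $>1-\delta(\varepsilon)$ (with $\delta(\varepsilon)\to 0$) carrying a finite $\varepsilon$-net, so that the measure is $\sigma$-compactly supported and $(X,\rho)$ is Polish. You instead invoke Lemma~1 and reduce to purity. This is a legitimate and arguably slicker shortcut once Lemma~1 is available, and it makes the role of the triangle inequality transparent; the paper's route, on the other hand, yields the extra information that the measure is concentrated on a $\sigma$-compact set.

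There is, however, a slip in your contradiction setup. The negation of ``$x\mapsto[\rho(x,\cdot)]$ is injective on some conull set'' is \emph{not} ``the set of bad pairs $\{(x,y):x\ne y,\ \rho(x,\cdot)=\rho(y,\cdot)\ \text{a.e.}\}$ has positive $\mu\times\mu$-measure.'' For example, on $([0,1],\text{Leb})$ the map $\Phi(x)=\min(x,1-x)$ is not injective on any conull set, yet the bad pairs form the antidiagonal, a null set. So your Fubini-style selection of a bad pair with $x$ non-degenerate is not justified as written. The repair is immediate and in fact streamlines the argument: drop the contradiction framing altogether. Set $X''=X'\cap\{\text{non-degenerate points}\}$, which is conull, and take any $x,y\in X''$ with $\rho(x,z)=\rho(y,z)$ for a.e.\ $z$. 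Your triangle-inequality computation (choose $z$ in the positive-measure set $B(x,\delta)\cap\{z:\rho(y,z)=\rho(x,z)\}$ with $\delta=\rho(x,y)/3$) forces $\rho(x,y)<\tfrac{2}{3}\rho(x,y)$, hence $\rho(x,y)=0$; in the metric case $x=y$, and in the semimetric case $x$ and $y$ lie in the same $\psi_\rho$-class. Thus the map is honestly injective on $X''$ (respectively on its image in $X_\rho$), which is exactly the purity Lemma~1 requires.
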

\begin{proof}
The fact that an admissible metric satisfies the condition in question was
observed above. Now assume that this condition is satisfied. We must prove
that the space $(X,\rho)$ (if $\rho$ is a metric) or the quotient space
$X_{\rho}$ (if $\rho$ is a semimetric) is separable. It suffices to
consider the case of a metric.

The condition stated in the proposition implies that for every
$\epsilon$ there exists $\delta=\delta(\epsilon)$ with
$\lim_{\epsilon \to 0} \delta(\epsilon)=0$ such that some set of
measure $>1-\delta$ contains a finite $\epsilon$-net. But this means
exactly that the space is separable and the measure is concentrated on
a $\sigma$-compact set.
\end{proof}

\textbf{Several examples.}
1.~An important example is the following class of semimetrics, which
was in fact intensively used in entropy theory. Every partition
$\xi$ of a space $(X,\mu)$ into finitely or countably many
measurable sets gives rise to a semimetric:
$$\rho_{\xi}(x,y)=\delta_{(\xi(x),\xi(y))},$$
where $\xi(z)$ is the element of $\xi$ that contains $z$. In this
case, $X_\rho$ is a finite or countable metric space. Such
(finite) semimetrics are called {\it cuts}, and their linear
combinations are called {\it cut semimetrics}
(in the terminology of
\cite{D}). It is easy to see that cut (semi)metrics are admissible.

\medskip
2.~The very important metric defined by the formula $\rho(x,y)={\rm const}$ for $x \ne y$
determines a discrete uncountable space. We will call it the constant
metric. {\it The constant metric on a space with continuous measure is
not admissible.} In this case, the $\sigma$-algebra
generated by the open sets is trivial and does not separate points of the
space, i.e., a point is not determined by the collection of
distances to the other points.

\medskip
3.~The condition defining an admissible metric can be strengthened by
requiring, in condition~1), that
$$\int_X\int_X \rho(x,y)^p d\mu(x)d\mu(y)<\infty$$
for $p>1$; in this case, we say that the metric is
{\it $p$-admissible}.  Let us say that $\infty$-admissible metrics
(semimetrics) are {\it bounded}; this class of admissible metrics will
be most
useful in what follows.

\medskip
4.~In combinatorial examples, it often suffices to consider
metrics with which the space is compact or precompact (or, in the case of a
semimetric, quasi-compact).
For example, adic transformations \cite{V1,V2} act in the space of
infinite paths of an $\Bbb N$-graded graph, which is a totally
disconnected compact space.

\medskip
The set of admissible metrics on a Lebesgue space
$(X,\mu)$ with continuous measure is a convex cone
${\cal R}(X,\mu)={\cal R}$ with respect to the operation of taking a linear combination
of metrics with nonnegative coefficients. This cone is a canonical
object (by the uniqueness of a Lebesgue space up to isomorphism) and plays a
role similar to the role of the simplex of Borel probability measures in
topological dynamics. In many cases, it suffices to consider
admissible (semi)metrics that produce compact spaces (after completion), but we
do not exclude the case of a noncompact space. One may consider
different topologies on the cone
${\cal R}$; the most natural of them is the weak topology, in which an
$\varepsilon$-neighborhood of a metric $\rho$ is the collection of
metrics
$$
\Bigl\{\theta:
(\mu\times\mu)\{(x,y):|\rho(x,y)-\theta(x,y)|<\varepsilon\}>1-\varepsilon\Bigr\}.
$$

The property of being an admissible metric is invariant under
measure-preserving transformations:
if a metric $\rho$ is admissible and a transformation $T$ of the space
  $(X,\mu)$ preserves the measure  $\mu$, then the image  $\rho^T$ of $\rho$,
defined by the formula $\rho^T(\cdot,\cdot)=\rho(T\cdot,T\cdot)$, is also admissible.
Thus the group of (classes of) measure-preserving transformations acts
on the cone ${\cal R}$ in a natural way.

\section{Average and maximal metrics}

Let $T$ be a measure-preserving transformation (in what follows, it
will be
an automorphism). When considering automorphisms or groups of
automorphisms in spaces with admissible semimetrics, it is natural to
assume that there exist an invariant set of full measure on which the
(semi)metric is admissible in the sense of our definition.

In addition to the notion of admissible (semi)metrics, we define the
class of  {\it $T$-admissible metrics}. The $T$-admissibility
condition must be invariant, in the sense that if
$\cal M$ is the class of $T$-admissible metrics, then
 $$V{\cal M}\equiv \{\rho: \rho(x,y)=\rho_1(Vx,Vy),\;\rho_1 \in \cal
 M\}$$ is the class of
$VTV^{-1}$-admissible metrics in the same sense. Among many possible
versions, we choose the class of admissible (semi)metrics for which
$T$ is a {\it Lipschitz transformation} almost everywhere:
there exists a positive constant $C$ such that for $(\mu\times \mu)$-almost all pairs
$(x,y)$, the condition
$$\rho(Tx,Ty)\leq C\rho(x,y)$$
holds; let us say that metrics from this class are Lipschitz $T$-admissible (semi)metrics.

The choice of an appropriate class depends on the problem under
consideration and the properties of the automorphism. For example, in
the case of
adic automorphisms, it is most convenient to consider the class of Lipschitz metrics.
This class can also be defined for countable groups of automorphisms.

For an arbitrary admissible semimetric $\rho$, we have defined the
partition $\psi_{\rho}$ of the space
$(X,\mu)$. In the same way, given an arbitrary automorphism $T$ of the
space $(X,\mu)$, we consider the $T$-invariant partition
$\psi_{\rho}^T=\bigvee_{k=0}^{\infty} T^k \psi_{\rho}$.
We say that the semimetric $\rho$ is {\it generating} for $T$
if $\psi_{\rho}^T$ is the partition into separate points
$\bmod 0$ (which we denote by $\varepsilon $). If
$\rho$ is the metric generated by a finite partition, then this is the
ordinary definition of a generator   (see \cite{Ro1}).

Let us define the average metric and the $\sup$-metric for a given
automorphism.

\begin{definition}
Let $T$ be an automorphism of a space $(X,\mu)$, and let $\rho$ be an
admissible metric.

The average metric $\rho_n^T$ is defined by the formula
$$\hat \rho_n^T(x,y)=\frac{1}{n}\sum_{k=0}^{n-1}\rho(T^kx,T^ky).$$

The $\sup$-metric is defined by the formula
$$
{\bar \rho}_n^T(x,y)=\sup_{0\le k<n}\rho (T^k x,T^k y).
$$
\end{definition}

The following important result is a direct corollary of the pointwise
ergodic theorem.

\begin{theorem}
For any automorphism $T$ and any admissible (semi)metric
$\rho$, the limit of the sequence of
average (semi)metrics $\rho^T_n$, which we denote by $\hat \rho$,
exists almost everywhere in the space
$(X\times X, \mu \times \mu)$:
$$\hat \rho^T (x,y)=\lim_{n \to \infty}\frac{1}{n}\sum_{k=0}^{n-1}\rho(T^kx,T^ky);$$
$\hat \rho$ is a metric if and only if
$\rho$ is a metric or a generating semimetric.
\end{theorem}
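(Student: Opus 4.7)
The plan is to apply Birkhoff's pointwise ergodic theorem to the product system $(X\times X,\mu\times\mu,T\times T)$ with the integrable function $\rho$. Since $T$ preserves $\mu$, $T\times T$ preserves $\mu\times\mu$, and Condition~1) of Definition~1 gives $\rho\in L^1(\mu\times\mu)$. Birkhoff then yields $(\mu\times\mu)$-a.e.\ convergence of $\hat\rho_n^T$ to a $(T\times T)$-invariant limit $\hat\rho^T\in L^1$. That $\hat\rho^T$ is almost everywhere a semimetric is routine: non-negativity, symmetry, and the triangle inequality pass term by term to each partial average $\hat\rho_n^T$ (the triangle inequality on the full-measure cube $X'\times X'\times X'$ where $\rho$ itself satisfies it) and therefore to the pointwise limit.

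For the characterisation of when $\hat\rho^T$ is a metric, the necessary direction is immediate: if $\psi_\rho^T\neq\varepsilon$, then $\psi_\rho^T$ has a non-trivial class, hence there is a positive-measure set of pairs $(x,y)$ with $x\neq y$ and $\rho(T^kx,T^ky)=0$ for every $k\geq 0$; for all such pairs $\hat\rho^T(x,y)=0$ while $x\neq y$, so $\hat\rho^T$ is not a metric.

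The substantive direction is that $\psi_\rho^T=\varepsilon$ (the case of a metric or a generating semimetric) implies $\hat\rho^T$ is a metric $\bmod 0$. Set $E=\{(x,y):\hat\rho^T(x,y)=0\}$. By the semimetric properties of $\hat\rho^T$, $E$ defines a measurable equivalence relation, and by $(T\times T)$-invariance of $\hat\rho^T$, $E$ itself is $(T\times T)$-invariant. Suppose, for contradiction, that $E$ has positive $\mu\times\mu$-measure off the diagonal. The key estimate is
\[
\hat\rho_n^T(x,y)\;\geq\;\delta\cdot\frac{1}{n}\#\{\,0\leq k<n:\rho(T^kx,T^ky)\geq\delta\,\}\qquad(\delta>0).
\]
Restricting $T\times T$ to the invariant set $E$ with the normalised measure and applying Birkhoff to $\mathbf{1}_{\{\rho\geq\delta\}}$, the vanishing of $\hat\rho^T$ on $E$ forces the limiting density of $(T\times T)$-orbit visits to $\{\rho\geq\delta\}$ to be zero for $(\mu\times\mu)$-a.e.\ $(x,y)\in E$. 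Integrating over $E$ gives $(\mu\times\mu)(E\cap\{\rho\geq\delta\})=0$ for every $\delta>0$, so $\rho=0$ $\bmod 0$ on $E$. By the $(T\times T)$-invariance of $E$ this in turn gives $\rho(T^kx,T^ky)=0$ for all $k\geq 0$ and $(\mu\times\mu)$-a.e.\ $(x,y)\in E$, whence $E$ is contained $\bmod 0$ in the equivalence relation defined by $\psi_\rho^T$; but by hypothesis the latter is the diagonal, contradicting the assumption on $E$.

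The main obstacle I expect is executing the last paragraph with full rigour---in particular, correctly applying Birkhoff on the invariant subset $E$ (or, equivalently, invoking the conditional ergodic theorem with respect to the $(T\times T)$-invariant $\sigma$-algebra) and keeping track that the several ``$\bmod 0$'' exceptional sets arising at each step do not conspire to recreate a positive-measure off-diagonal portion of $E$.
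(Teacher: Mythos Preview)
Your argument for the existence of the limit is exactly the paper's: apply Birkhoff's ergodic theorem to $T\times T$ on $(X\times X,\mu\times\mu)$ using the integrability $\rho\in L^1(\mu\times\mu)$ from Condition~1) of Definition~1. That single sentence is in fact \emph{all} the paper gives as proof of this theorem; the ``$\hat\rho$ is a metric iff $\rho$ is a metric or generating'' clause is stated but not argued.

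So your treatment of the second clause goes beyond the paper. The substantive direction (generating $\Rightarrow$ $\hat\rho$ is a metric) is handled by a clean idea: on the $(T\times T)$-invariant set $E=\{\hat\rho=0\}$, Birkhoff for $\mathbf{1}_{\{\rho\ge\delta\}}$ together with the bound $\hat\rho_n\ge\delta\cdot(\text{density})$ forces $\rho=0$ a.e.\ on $E$, and invariance then pushes this to every iterate. One point to tighten, which you flag yourself: in the $\bmod\,0$ framework of the paper, ``$\hat\rho$ is not a metric'' means $\psi_{\hat\rho}\neq\varepsilon$, which is \emph{not} equivalent to $(\mu\times\mu)(E\setminus\text{diag})>0$; a nontrivial measurable partition can have an equivalence relation of product-measure zero (think of the partition of $[0,1]^2$ into vertical lines). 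Your contradiction hypothesis and your phrasing of the necessary direction (``positive-measure set of pairs'') both use the latter, stronger, statement. The fix is to run the same averaging argument fibrewise at the level of the partitions $\psi_{\hat\rho}$ and $\psi_\rho^T$ rather than on the product set $E$; the mechanism is identical, and then both directions reduce to the easy containment $\psi_{\hat\rho}\preceq\psi_\rho^T$ (immediate) and its reverse.
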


The existence of the limit follows from the fact that the integral
$\int_X\int_X\rho(x,y)d\mu(x)d\mu(y)$ is finite and the ergodic theorem.

\begin{definition}
The metric
$$
\hat \rho^T(x,y)=\lim_{n\to \infty} \frac{1}{n}\sum_{k=0}^{n-1}\rho(T^kx,T^ky)
$$
  (the limit exists
$\mu$-a.e.) is called the {\it average}, or the $l^1$-average of $\rho$
with respect to the automorphism $T$.

The metric
$$\bar \rho^T(x,y)=\sup_{k\geq 0}  \rho(T^k x, T^k x)$$
is called the limiting $\sup$-metric of $\rho$
with respect to $T$.
\end{definition}

It is clear that $\hat \rho^T$ and $\bar \rho^T$
satisfy all conditions of the definition of a (semi)metric;
in what follows, we will
consider only admissible metrics or generating semimetrics
$\rho$, so that
$\hat \rho^T$, and  $\bar \rho^T$ for an ergodic automorphism $ T$,  are
metrics. The superscript $T$ in the notation for the average and
$\sup$-metrics will be omitted if the automorphism is clear from the
context. It is also obvious that
$\hat \rho \leq \bar \rho$. However, the metrics
$\hat \rho, \bar \rho$ may not be admissible even if the
(semi)metric $\rho$ is admissible.

In what follows, we will mainly consider the
{\it average metric} $\hat \rho$. In most interesting cases, namely
if $T$ is  weakly mixing, i.e., its spectrum in the orthogonal complement
to the space of constants is continuous, this metric is constant and
hence not admissible; the $\sup$-metric may be constant even
for automorphisms with discrete spectrum. However, in all cases we
will be interested not in the limiting metrics themselves, but in the
{\it asymptotic behavior of the averages
$\rho_n^T$ as $n\to \infty$}. As we will see, automorphisms with
discrete spectrum stand apart, since for them the average metric is
often admissible.

It is clear that $\hat \rho$ is nothing else but the projection of the
function $\rho$, regarded as an element of the space
$L^1(X\times X,\mu \times\mu)$, to the subspace of $(T\times
T)$-invariant functions, i.e., the expectation of the metric
$\rho$ with respect to the subspace of invariant functions on
$X\times X$. This space consists of constants if and only if $T$ has
no nontrivial eigenfunctions (in other words, $T$ is weakly mixing).
In this case,
$\hat \rho$ is almost everywhere a constant, which is equal to the average
$\rho$-distance between the points of the space $X$. At the same time, if
$T$ is not weakly mixing, then the spectrum of $T$ contains
a discrete component and $\hat \rho$ may be a nonconstant
{\it $T$-invariant (semi)metric}. In this case, one may obtain bounds
on  $\hat \rho$ using
Fourier analysis.

Definitions and Lemma~1 imply the following lemma.

\begin{lemma}
Let $\rho$ be an admissible Lipschitz metric for an automorphism $T$;
then the average metric $\hat \rho$ is also admissible and Lipschitz
for $T$.
\end{lemma}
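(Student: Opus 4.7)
The claim splits into (a)~$\hat\rho$ is $T$-Lipschitz and (b)~$\hat\rho$ is admissible. Part (a) is direct from the hypothesis: applying the bound $\rho(Tu,Tv)\le C\rho(u,v)$ to the pair $(T^kx,T^ky)$ yields $\rho(T^{k+1}x,T^{k+1}y)\le C\rho(T^kx,T^ky)$ almost surely; averaging in $k$ over $0,\dots,n-1$ gives $\hat\rho_n^T(Tx,Ty)\le C\hat\rho_n^T(x,y)$, and invoking Theorem~2 to pass to the limit produces $\hat\rho(Tx,Ty)\le C\hat\rho(x,y)$.

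For part (b) I would verify the two conditions of Definition~1. Condition~1 is inherited from the approximants $\hat\rho_n^T$ by pointwise passage to the limit guaranteed by Theorem~2: nonnegativity, symmetry and the triangle inequality all survive, and integrability follows from
\[
\int_X\!\int_X \hat\rho(x,y)\,d\mu(x)\,d\mu(y)\;\le\;\int_X\!\int_X\rho(x,y)\,d\mu(x)\,d\mu(y)\;<\;\infty
\]
by Fatou and the $T$-invariance of $\mu$ (which makes each $\int\!\!\int\hat\rho_n^T d\mu d\mu$ coincide with $\int\!\!\int\rho\,d\mu d\mu$). For Condition~2 I would apply the nondegeneracy criterion of Proposition~1, equivalently the purity criterion of Lemma~1, and use the Lipschitz hypothesis to transport small $\rho$-balls into small $\hat\rho$-balls. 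The iterated bound $\rho(T^kx,T^ky)\le C^k\rho(x,y)$ shows that for each fixed $n$ a $\rho$-ball of radius $\delta$ sits inside a $\hat\rho_n^T$-ball of radius at most $\delta(C^n-1)/(n(C-1))$ when $C>1$, or of radius $\delta$ when $C=1$, so on the finite-$n$ level admissibility of $\rho$ gives positive measure to $\hat\rho_n^T$-balls.

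The main obstacle is the passage from this finite-$n$ estimate to the limit $\hat\rho$: for $C>1$ the prefactor $(C^n-1)/(n(C-1))$ blows up in $n$, and one cannot simply take $n\to\infty$ in the ball inclusion. The Lipschitz hypothesis must therefore enter through a genuinely ergodic argument rather than via a direct ball-to-ball comparison; the cleanest route is to establish purity of $\hat\rho$ via Lemma~1, showing that any identification $\hat\rho(x,\cdot)\equiv\hat\rho(x',\cdot)$ mod~$0$ forces $x=x'$ mod~$0$ by combining the decomposition
\[
\hat\rho(x,y)-\hat\rho(x',y)\;=\;\lim_n\frac1n\sum_{k=0}^{n-1}\bigl(\rho(T^kx,T^ky)-\rho(T^kx',T^ky)\bigr)
\]
with purity of $\rho$ and the Lipschitz control of the forward orbit. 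Everything outside this step is routine bookkeeping around Theorem~2 and Proposition~1.
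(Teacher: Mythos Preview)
The paper does not prove this lemma; immediately after the statement it writes ``We do not prove this assertion, because we do not need it in this paper.'' So there is no argument in the paper to compare your proposal against.

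Your part~(a) is correct and can be sharpened: $\hat\rho$ is a pointwise limit of Ces\`aro averages and hence $(T\times T)$-invariant, so in fact $\hat\rho(Tx,Ty)=\hat\rho(x,y)$ and the Lipschitz constant is~$1$.

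Your part~(b) contains a real gap, which you yourself flag: the ball-comparison bound degenerates when $C>1$, and the ``purity route'' you outline is only a sketch---you do not explain why an equality $\hat\rho(x,\cdot)\equiv\hat\rho(x',\cdot)$ mod~$0$ forces $x=x'$. This gap cannot be closed in the stated generality, because the lemma as written is false. Take $T$ to be the two-sided Bernoulli shift on $X=\{0,1\}^{\mathbb Z}$ with the product $(\tfrac12,\tfrac12)$ measure, and set $\rho(x,y)=\sum_{n\in\mathbb Z}2^{-|n|}|x_n-y_n|$. Then $\rho$ is a bounded admissible metric and $T$ is Lipschitz with constant $C=2$ (since $2^{-|n-1|}\le 2\cdot 2^{-|n|}$). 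But $T$ is mixing, hence weakly mixing, so $T\times T$ is ergodic on $(X\times X,\mu\times\mu)$ and the ergodic average $\hat\rho$ is a.e.\ equal to the constant $\int\!\!\int\rho\,d\mu\,d\mu$; the paper itself notes (Section~2, Example~2, and again in Section~3) that the constant metric on a continuous measure space is \emph{not} admissible. This matches the later development: Theorem~4 establishes admissibility of $\hat\rho$ only for translations on compact Abelian groups, i.e.\ under a discrete-spectrum hypothesis. The obstacle you encountered is therefore intrinsic to the assertion, not to your method.
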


We do not prove this assertion, because we do not need it in this paper.

Here is an example of computing the average metric generated by the
cut semimetric in the case of  a rotation of the circle.

\medskip
\textbf{Example}. Let $X={\Bbb T}^1={\Bbb R}/\Bbb Z$, and let
$\lambda\in X$ be an irrational number. Consider the semimetric
$\rho(x,y)=|\chi_A(x)-\chi_A(y)|$, where $\chi_A$ is the indicator of
a measurable set $A\subset X$; the metric
$\rho(x,y)$ is $T$-admissible in the sense of our definition for the
shift $T_{\lambda}$ by any irrational number
  $\lambda$. The corresponding average metric is shift-invariant
and looks as follows:
$\hat\rho(x,y)=m[(A+x)\Delta\mu(A+y)]=m[A\Delta (A+x-y)]$; it is
obviously admissible.

\section{Entropy and scaling entropy}

\subsection{The $\varepsilon$-entropy of a measure in a metric space}

Recall that the $\varepsilon$-entropy of a compact metric space
$(X,\rho)$ is the function  $\varepsilon\mapsto
H_{\rho}(\varepsilon)$ whose value is equal to the minimum number of
points in an $\varepsilon$-net of  $(X,\rho)$.
\begin{definition}

The $\varepsilon$-entropy of a measure space
$(X,\mu)$ with an admissible metric
$\rho$ is the function
$$\varepsilon\mapsto H_{\rho,\varepsilon}(\mu)=\inf \{H(\nu): k_{\rho}(\nu,\mu)<\varepsilon\},$$
where $\nu$ ranges over the set of all discrete measures with finite
entropy and $k_{\rho}(\cdot,\cdot)$ is the Kantorovich metric on the
space of Borel probability measures on $(X,\rho)$. The entropy of a
discrete measure $\nu=\sum_i c_i \delta_{x_i}$ is defined in the usual
way: $H(\nu)=-\sum_i c_i\ln c_i$.
\end{definition}

For our purposes, it is more convenient to use
in the above definition another characteristic instead of
$H(\mu)$, namely,
$$
H'_{\rho,\varepsilon}(\mu)=
\min\Bigl\{\ln k: \exists X', \mu(X')>1-\varepsilon,\;
\exists \{x_i\}_1^k: X'\subset \bigcup_{i=1}^k V_{\varepsilon}(x_i)\Bigr\},
$$
where $V_{\varepsilon}(x)$ is the $\varepsilon$-ball centered at $x$;
thus $\{x_1,x_2,\dots, x_k\}$ is an $\varepsilon$-net in
$X'$. The finiteness of $H'$ follows from the fact that a Borel
probability measure in a Polish space is concentrated, up to
$\varepsilon$, on a compact set; $H'$ is more convenient for
computations than $H$.

In this paper, we use the following simple inequality.

\begin{lemma}
For every compact metric space
$(X,\rho)$ and every nondegenerate Borel measure
$\mu$, the following inequality holds:
$$H_{\rho,(d+1)\varepsilon}(\mu)\leq H'_{\rho,\varepsilon}(\mu),$$
where $d$ is the diameter of the space.
\end{lemma}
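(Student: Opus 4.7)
The idea is to turn an approximate $\varepsilon$-cover witnessing $H'_{\rho,\varepsilon}(\mu)=\ln k$ into a discrete probability measure $\nu$ concentrated on the net, whose entropy is at most $\ln k$ and whose Kantorovich distance from $\mu$ is strictly less than $(d+1)\varepsilon$. Then $H_{\rho,(d+1)\varepsilon}(\mu)\le H(\nu)\le \ln k$, yielding the claim.

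\textbf{Step 1 (choose an optimal near-cover).} Pick $X'\subset X$ with $\mu(X')>1-\varepsilon$ and points $x_1,\dots,x_k$ with $X'\subset\bigcup_{i=1}^{k} V_{\varepsilon}(x_i)$ realizing the value $\ln k=H'_{\rho,\varepsilon}(\mu)$. Define a measurable partition $\{A_1,\dots,A_k\}$ of $X'$ by assigning each $x\in X'$ to the smallest index $i$ with $x\in V_{\varepsilon}(x_i)$; measurability follows from $\rho$ being a measurable function on $X\times X$.

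\textbf{Step 2 (construct $\nu$).} Put
\[
\nu=\Bigl(\mu(A_1)+\mu(X\setminus X')\Bigr)\delta_{x_1}+\sum_{i=2}^{k}\mu(A_i)\,\delta_{x_i}.
\]
This is a discrete probability measure with at most $k$ atoms, so its Shannon entropy satisfies $H(\nu)\le \ln k$, and in particular $H(\nu)<\infty$.

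\textbf{Step 3 (bound the Kantorovich distance by an explicit coupling).} Define a transport plan $\pi$ on $X\times X$ as follows: for $x\in A_i$, send $x$ to $x_i$; for $x\in X\setminus X'$, send $x$ to $x_1$. Formally, $\pi$ is the image of $\mu$ under the map $x\mapsto (x,\varphi(x))$, where $\varphi(x)=x_i$ on $A_i$ and $\varphi(x)=x_1$ on $X\setminus X'$; its marginals are $\mu$ and $\nu$ by construction. Then
\[
\int_{X\times X}\!\rho(x,y)\,d\pi(x,y)=\sum_{i=1}^{k}\int_{A_i}\rho(x,x_i)\,d\mu(x)+\int_{X\setminus X'}\rho(x,x_1)\,d\mu(x).
\]
The first sum is strictly less than $\varepsilon\,\mu(X')\le \varepsilon$ because $\rho(x,x_i)<\varepsilon$ on $A_i\subset V_\varepsilon(x_i)$; the second is at most $d\cdot\mu(X\setminus X')<d\varepsilon$ because the diameter is $d$ and $\mu(X\setminus X')<\varepsilon$. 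Hence $k_\rho(\nu,\mu)<\varepsilon+d\varepsilon=(d+1)\varepsilon$.

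\textbf{Step 4 (conclude).} Since $\nu$ is a discrete measure with finite entropy satisfying $k_\rho(\nu,\mu)<(d+1)\varepsilon$, it is admissible in the infimum defining $H_{\rho,(d+1)\varepsilon}(\mu)$, and hence
\[
H_{\rho,(d+1)\varepsilon}(\mu)\le H(\nu)\le \ln k=H'_{\rho,\varepsilon}(\mu).
\]
The only point requiring any care is the measurability of the assignment of points of $X'$ to nearest net members and the verification that the coupling $\pi$ has the correct marginals; both are routine. I do not expect a genuine obstacle: the bound $(d+1)\varepsilon$ is already dictated by splitting the transport cost on $X'$ (controlled by $\varepsilon$) from that on the exceptional set $X\setminus X'$ (controlled by $d\cdot\varepsilon$).
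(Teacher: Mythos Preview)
Your proof is correct and follows essentially the same approach as the paper: construct a discrete measure on the net points, exhibit an explicit coupling, and bound the transport cost by splitting into the contribution from $X'$ (at most $\varepsilon$) and from the exceptional set $X\setminus X'$ (at most $d\varepsilon$). The only difference is cosmetic: the paper introduces an extra point $x_\infty$ to absorb the mass $\mu(X\setminus X')$, whereas you route that mass to $x_1$; your choice is in fact slightly cleaner, since it keeps the support of $\nu$ to exactly $k$ points and makes the bound $H(\nu)\le\ln k$ immediate.
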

\begin{proof}
Assume that the diameter of the compact space does not exceed $1$. Assume that the
measure of a set
$X'$ is greater than $1-\varepsilon$ and $X'\subset
\bigcup_{i=1}^k V_{\varepsilon}(x_i)$. Thus the points
$x_1,\dots, x_k$ form an $\varepsilon$-net in $X'$.
Consider the discrete measure $\nu$ supported by the points $x_1,\dots,
x_k$ with charges $\nu(x_i)$ equal to the measures $\mu(V(x_i))$ of the corresponding balls
(if two balls have a nonempty intersection, then
we distribute the
measure of the intersection proportionally between their centers). Choose an arbitrary point
$x_{\infty}$ and set its charge equal to
$1-\mu(X')$. Now consider the Monge--Kantorovich transportation problem
with input measure
$\mu$ and output measure $\nu$.
It is easy to see that we have in fact
determined an admissible plan $\Psi$ for this  problem: the
transportation from a point $x\in X'$ goes to the points $x_i$ for
which $x\in V_{\varepsilon}(x_i)$, and
the remaining part of the measure $\mu$ on
the set
$X\setminus X'$ goes to the point $x_{\infty}$.
It is easy to compute
the cost of this plan; this gives a bound on the Kantorovich distance
between the measures $\nu$ and $\mu$:
$$k_{\rho}(\nu,\mu)\leq\varepsilon(1-\varepsilon)+\varepsilon<2\varepsilon.$$
On the other hand, we have $H(\nu)\leq \ln k
=H'_{\rho,\varepsilon}(\mu)$.
\end{proof}

\subsection{Scaling sequence and scaling entropy}

Let us define the notion of a scaling sequence for the entropy of an
automorphism. If an automorphism $T$ is fixed, we omit the superscript
in the notation for the average entropy and white simply
$\hat\rho_n$.

\begin{definition}
Let $T$  be an automorphism of a Lebesgue space
$(X,\mu)$ with a
$T$-invariant measure $\mu$. By definition, the class of scaling
sequences for the automorphism $T$ and a given (semi)metric
$\rho$ on $X$ is the class, denoted by
${\cal
H}_{\rho,\varepsilon}(T)$, of increasing sequences of positive numbers
$\{c_n, n\in {\Bbb N}\}$ such that
$${\cal H}_{\rho,\varepsilon}(T)= \Bigl\{\{c_n\}: 0<\liminf_{n\to \infty}\frac{H_{\hat\rho_n,\varepsilon}(\mu)}{c_n}\leq
\limsup_{n\to \infty}\frac{H_{\hat \rho_n,\varepsilon}(\mu)}{c_n}
<\infty\Bigr\}.$$
\end{definition}

In many cases, the class of scaling sequences for a given metric
$\rho$ does not depend on sufficiently small $\varepsilon$. In this
case, it is obvious that all sequences
$\{c_n\}$ from ${\cal
H}_{\rho}(T)$ are equivalent.

\begin{definition}
Assume that for a given ergodic automorphism $T$ of a space
$(X,\mu)$ there exists a (semi)metric
$\rho_0$ such that the class of scaling sequences for $\rho_0$ is the
maximal one
(i.e., for any other (semi)metric, sequences
$\{c_n\}$ from the corresponding class grow not faster than for
$\rho$). In symbols, we write this fact as
$${\cal H}_{\rho_{0}}(T)=\sup_{\rho}{\cal H}_{\rho}(T).$$
Then we say that ${\cal H}_{\rho_{0}}(T)$ is the class of scaling sequences for the
automorphism $T$ and the metric $\rho_0$ is $T$-maximal.
\end{definition}

It seems that such a metric exists for every automorphism. If we have
chosen some $T$-maximal scaling sequence and the corresponding limit
of entropies does exist, then it is called the
{\it scaling entropy}.

\begin{conjecture}
For every ergodic automorphism $T$, a generic $T$-admissible Lipschitz
metric is $T$-maximal. In particular, for a $K$-automorphism (i.e., an
automorphism with completely positive entropy), the scaling sequence is
equivalent to the sequence
$c_n=h(T)n$, where $h(T)$ is the entropy of $T$, for every Lipschitz
metric.
\end{conjecture}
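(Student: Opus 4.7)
The plan is to connect the scaling entropy of the average metric to the classical Kolmogorov--Sinai entropy, via cut semimetrics for the upper bound and Katok's characterisation of entropy through $(n,\varepsilon)$-separated sets for the lower bound. For the $K$-automorphism part, the goal is to prove $H_{\hat\rho_n,\varepsilon}(\mu)\sim h(T)n$ as a two-sided bound, from which both claims of the conjecture will follow in that special case.

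For the upper bound, given a Lipschitz admissible metric $\rho$, choose a finite measurable partition $\xi$ whose atoms have $\rho$-diameter at most $\delta$ outside a set of small measure. Then $\rho\le\delta+D\rho_\xi$ for some constant $D$, where $\rho_\xi$ is the cut semimetric of $\xi$, and the Lipschitz hypothesis propagates this to $\hat\rho_n\le\delta+D(\widehat{\rho_\xi})_n$. Applying Lemma~2 together with the fact that the refined partition $\bigvee_{k=0}^{n-1}T^{-k}\xi$ has $e^{n(h(T,\xi)+o(1))}$ essential atoms by the Shannon--McMillan--Breiman theorem yields an upper bound on the $\varepsilon$-entropy; refining $\xi$ and letting $\delta\to 0$ gives the limit $h(T)$.

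For the lower bound I would exploit the $K$-property combined with Katok's theorem. By admissibility, Proposition~1 ensures that there is $\varepsilon_0>0$ and a set of pairs of positive $(\mu\times\mu)$-measure with $\rho(x,y)>3\varepsilon_0$, and the conditional measures $\mu^x$ from Theorem~1 detect $\varepsilon_0$-separation on a set of positive $\mu$-measure. For a $K$-automorphism, Katok's theorem says that the minimal cardinality of an $(n,\varepsilon_0)$-spanning set for the Bowen metric $\bar\rho_n$ grows at least as $e^{n(h(T)-o(1))}$. One then transfers this bound from $\bar\rho_n$ to $\hat\rho_n$: because $\rho$ is a \emph{bounded} Lipschitz metric, two orbit pieces close in the $l^1$-average must agree to precision $\varepsilon_0$ on at least a positive fraction of coordinates, and this suffices to preserve the exponential count.

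For the genericity assertion, I would use a Baire category argument on the cone ${\cal R}$ of admissible Lipschitz metrics equipped with the weak topology of Section~2: the set of metrics realising $\sup_\rho {\cal H}_\rho(T)$ should be a dense $G_\delta$, density coming from convex combinations with a fixed maximising metric and the $G_\delta$ structure from writing maximality as a countable intersection of open conditions on normalised entropies $H_{\hat\rho_n,\varepsilon}(\mu)/c_n$. The hard step, and the main obstacle overall, is the transfer from $\bar\rho_n$ to $\hat\rho_n$ in the lower bound: since $\hat\rho\le\bar\rho$ the naive entropy estimate goes the wrong way, so one must use admissibility together with the Lipschitz condition to argue that orbits close on $l^1$-average are close on a definite fraction of indices in the $\sup$-sense. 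The genericity half is also delicate, since one must choose the topology on ${\cal R}$ so that the maximising set is genuinely residual rather than only non-empty.
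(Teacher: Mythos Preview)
The statement you are attempting to prove is labelled \emph{Conjecture} in the paper, and the paper offers no proof of it; immediately after stating it the author only remarks that ``a preparatory result in this direction was obtained in \cite{Ke}.'' So there is nothing in the paper to compare your argument against: you are sketching an attack on what the author presents as an open problem.

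As for the sketch itself, the outline is reasonable but there is a genuine gap precisely where you flag it. In the lower bound, from $\hat\rho_n(x,y)<\varepsilon$ you get by Markov's inequality that $\rho(T^kx,T^ky)\le\varepsilon_0$ for a fraction at least $1-\varepsilon/\varepsilon_0$ of the indices $k<n$. This is a Hamming-type closeness, not a bound on $\bar\rho_n(x,y)$, so Katok's spanning-set count for $\bar\rho_n$ does not transfer directly. To make the step work you would need something like: an $(n,\varepsilon)$-ball for $\hat\rho_n$ is covered by at most $e^{o(n)}$ balls for $\bar\rho_n$ (or for the Hamming metric on a symbolic model). That is essentially an Ornstein--Weiss / $\bar d$-metric counting lemma, and it does not follow from boundedness plus the one-sided Lipschitz hypothesis alone; you would have to invoke a specific combinatorial covering estimate and check it applies to an arbitrary admissible Lipschitz $\rho$, not just to cut semimetrics. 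Also note that your upper-bound inequality $\rho\le\delta+D\rho_\xi$ and its averaged version do not actually use the Lipschitz condition; the Lipschitz hypothesis enters, if anywhere, in controlling the exceptional set under iteration, and you have not said how.

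The genericity half is only a heuristic at this stage: you have not shown that a maximising metric exists, nor that the level sets $\{\rho: H_{\hat\rho_n,\varepsilon}(\mu)/c_n>a\}$ are open in the weak topology on $\mathcal{R}$ (the $\varepsilon$-entropy is an infimum, which tends to produce upper semicontinuity rather than lower). Since the paper itself leaves the whole statement as a conjecture, these gaps are not surprising, but the proposal as written is a plan rather than a proof.
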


A preparatory result in this direction was obtained in
\cite{Ke}.

In this paper, we will prove that for an automorphism with purely discrete
spectrum and a $T$-admissible metric,  the class ${\cal H}_{\rho}(T)$ of scaling sequences
  is the class of bounded sequences.

\section{Invariant metrics on groups and averages of admissible
metrics}

\subsection{Invariant metrics and discrete spectrum}

Let us recall some known facts about ergodic automorphisms with discrete
spectrum. It obviously follows from the character theory of
commutative groups that the spectrum of a translation on a compact Abelian
group is discrete. By the classical von Neumann theorem, the converse
is also true: an ergodic automorphism with discrete spectrum is
metrically isomorphic to the translation $T$ on a compact Abelian group $G$
endowed with the Haar measure $m$ by an element whose powers form a
dense subgroup:
$$x\mapsto Tx=x+g,\quad \operatorname{Cl}\{ng,\, n\in \Bbb Z\}=G$$
(we use the additive notation). Note that on a compact group $G$ there
are many metrics that are invariant under the whole group of
translations
and determine the standard group topology. We will need the
following assertion (which is, possibly, partially known).

\begin{statement}
The spectrum of an ergodic automorphism $T$ of a measure space
$(X,\mu)$ for which there exists a $T$-invariant admissible semimetric
$\rho$ contains a discrete component. Moreover, if
$\rho$ is a metric, then the spectrum of  $T$ is discrete and,
consequently, $T$ is isomorphic to a translation on a compact Abelian group.
\end{statement}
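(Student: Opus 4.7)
The plan is to reformulate the hypothesis of a $T$-invariant admissible metric as the existence of a nonconstant $(T\times T)$-invariant vector in $L^2(X\times X,\mu\times\mu)$, and then to derive the two conclusions from spectral theory: the first from the failure of weak mixing, the second from the fact that $(T\times T)$-invariants are supported on the Kronecker factor. First I would reduce the semimetric case to the metric case by passing to the quotient $(X_\rho,\rho,\mu_\rho)$ of Definition~1: the induced transformation $\bar T$ is a factor of $T$ and preserves an admissible metric on $X_\rho$, so any discrete component in the spectrum of $\bar T$ lifts to a discrete component of the spectrum of $T$. In the metric case I would replace $\rho$ by the bounded metric $\rho\wedge 1$; this preserves the triangle inequality, the admissibility and the $T$-invariance, and places $\rho$ in $L^2(X\times X)$. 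By Example~2 the truncated metric cannot be almost everywhere constant, so it is a nonconstant $(T\times T)$-invariant vector; thus $T\times T$ is not ergodic, $T$ is not weakly mixing, and $T$ admits a nontrivial eigenfunction. This is the first assertion.

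For the stronger claim, I would introduce the splitting $L^2(X,\mu)=L^2_{pp}\oplus L^2_c$ of the unitary representation of $T$ into its pure-point and continuous parts, and the associated Kronecker $\sigma$-algebra $\mathcal A_{pp}$ (generated by the eigenfunctions of $T$), so that $L^2(\mathcal A_{pp})=L^2_{pp}$. The key spectral observation is that for $f,g\in L^2(X)$ the spectral measure of $f\otimes g$ under $T\times T$ on the circle equals the convolution $\sigma_f*\sigma_g$; since convolution with a non-atomic probability measure remains non-atomic, $\sigma_f*\sigma_g$ has an atom at~$1$ only when both $f,g\in L^2_{pp}$. Hence every $(T\times T)$-invariant vector in $L^2(X\times X)$ lies in $L^2_{pp}\otimes L^2_{pp}=L^2(\mathcal A_{pp}\otimes\mathcal A_{pp})$; in particular the truncated $\rho$ is $\mathcal A_{pp}\otimes\mathcal A_{pp}$-measurable. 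Fubini then gives that for $\mu$-almost every $x$ the function $y\mapsto\rho(x,y)$ is $\mathcal A_{pp}$-measurable, and the purity characterization in Lemma~1 says exactly that this family separates points mod~$0$. Therefore $\mathcal A_{pp}=\mathfrak A$ mod~$0$, $T$ has pure point spectrum, and the Halmos--von Neumann theorem identifies $T$ with a translation on a compact Abelian group.

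The main obstacle I anticipate is the spectral reduction that $(T\times T)$-invariants live in $L^2_{pp}\otimes L^2_{pp}$: it is this step that converts a single invariant function into $\sigma$-algebra-level information about the Kronecker factor. It rests on the convolution identity for spectral measures together with the elementary fact that convolving any probability measure on the circle with a non-atomic one produces a non-atomic measure. Everything else should be routine — the truncation $\rho\wedge 1$ clearly preserves the required properties, the quotient reduction uses only the material of Section~2, and the extraction $\mathcal A_{pp}=\mathfrak A$ from purity is immediate from Lemma~1.
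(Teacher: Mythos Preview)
Your proof is correct and follows essentially the same route as the paper's: both observe that an admissible $T$-invariant (semi)metric gives a nonconstant $(T\times T)$-invariant function, use that such invariants lie in $H\otimes H$ where $H=L^2_{pp}$ is the span of the eigenfunctions, and then conclude from admissibility that the Kronecker factor must be all of $X$ (the paper phrases this last step in skew-product/fiber language, you phrase it via Lemma~1's purity, but it is the same argument). Your version is in fact more careful on two points the paper glosses over---the truncation $\rho\wedge 1$ to pass from $L^1$ to $L^2$, and the explicit spectral-measure convolution justification for why $(T\times T)$-invariants sit inside $L^2_{pp}\otimes L^2_{pp}$.
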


\begin{proof}
Since an admissible (semi)metric lies in the space
$L^1(X\times X, \mu \times \mu)$, the tensor square of the operator $U_T$,
which corresponds to the automorphism $T\times T$, has nonconstant
eigenfunctions. This can happen only if the spectrum of the unitary
operator $U_T$ contains a discrete component, and the first claim is
proved. In other words, $T$ is an extension of some
quotient automorphism with discrete spectrum, which may coincide
with $T$ itself. This means that $T$ is a skew product over a base
with discrete spectrum.
Denote by $H \subset L^2(X,\mu)$ the subspace spanned by all
eigenfunctions of $U_T$. All invariant functions of the operator
$U_T\otimes U_T$ belong to the tensor square $H\otimes H$;
hence these functions, regarded as functions of two variables, do not change
when the argument ranges over a fiber of the skew product. But if these
fibers are not single point sets, it follows that
the metric does not distinguish points in fibers and hence is a
semimetric. Thus if  $\rho$ is a metric, then each fiber
necessarily consists of a single point, the spectrum of $T$ is purely
discrete, and, by the ergodicity, $T$ is isomorphic to a translation on a
group.
 \end{proof}

Let us supplement this proof with an important refinement. Assume that the
automorphism $T$ has an orbit that is everywhere dense with respect to
the metric $\rho$, i.e., $T$ is topologically transitive (though we do
not assume that it is a priori continuous). It is clear from above that
this condition follows in fact from the existence of an invariant
metric. Since the metric is admissible, we may assume without loss of
generality that $(X,\rho)$ is a Polish space. Consider the dense orbit
$O=\{T^nx,\, n \in \Bbb Z\}$ of some point $x$. The restriction of $\rho$
to $O$ is a translation-invariant metric on the group
$\Bbb Z$, and all translations are isometries. Hence the completion of
$O$ is an Abelian group to which we can extend the translations and
their limits. Therefore $X$ is a Polish monothetic
group.\footnote{A topological group that contains a dense infinite cyclic
subgroup is called monothetic. Note that there are many non-locally
compact monothetic groups on which there is an invariant metric, but
there is no invariant measure. A recent example is the Urysohn universal
space regarded as a commutative group, see \cite{CV}.}
Obviously, the measure $\mu$ is invariant under the action of the
closures of powers of $T$, i.e., it is an invariant probability
measure on the whole group. Hence, by Weil's theorem, $X$ is a compact
commutative group and $T$ is the translation by an element whose powers are
everywhere dense.

\subsection{Admissible invariant metrics}

As we have already observed, for weakly mixing automorphisms, the
average of every metric is constant, since there are no
other invariant metrics. However, for automorphims whose spectra
contain a discrete component or are purely discrete, there are many
invariant (semi)metrics. Hence, in order to study such automorphisms,
we should investigate the question when the average  metric
for an automorphism $T$ with a discrete spectrum is admissible.

Given a translation-invariant metric
$\rho$ on a compact commutative group with the Haar measure, consider
the function
 $\phi_{\rho}(r)=\rho(x,x+r)=\rho(0,r)$. In the example from Section~3,
 it looked as $\phi(r)\equiv\hat\rho(x,x+r)= m[A\Delta (A+r)]$.

One can easily write down necessary and sufficient algebraic conditions on
a measurable function $\phi$ that guarantee that it can be written as
$\phi_{\rho}$ for a measurable invariant metrics:
 $$\phi(\textbf{0})=0,\;\phi(x)\geq 0, \;\phi(-x)=\phi(x),\; \phi(x)+\phi(y)\geq \phi(x+y).$$

We will not need these conditions; the only important fact is that the
admissibility condition from Lemma~1 can easily be reformulated in
terms of this function.

\begin{theorem}
An invariant measurable (semi)metric on a commutative compact group
(satisfying Condition~1 from Definition~1) is admissible if and only if
any of the following conditions holds.

{\rm1.} The corresponding function $\phi_{\rho}$ is measurable, and
$$\mu \{z: \phi_{\rho}(z)\ne \phi_{\rho}(g+z)\}>0$$
for almost all $g$.

{\rm2.} $$\operatorname{ess\,inf}_{g\in V\setminus 0}\phi_{\rho}(g)= 0,$$
where $V$ is an arbitrary neighborhood of the zero of the group in the
standard topology.
\end{theorem}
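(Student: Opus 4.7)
The plan is to translate both conditions into statements about the one-variable function $\phi_\rho(g)=\rho(0,g)$, and then to apply the two general criteria already established: Proposition~1 for condition~(2) and Lemma~1 for condition~(1). Translation invariance of $\rho$ together with invariance of the Haar measure $\mu$ makes the reduction clean, since the ball $\{y:\rho(x,y)<\varepsilon\}$ is the translate $x+A_\varepsilon$, where $A_\varepsilon:=\{z:\phi_\rho(z)<\varepsilon\}$, whose Haar measure is independent of $x$.

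For the equivalence with condition~(2), Proposition~1 reduces admissibility to the requirement that $\mu(A_\varepsilon)>0$ for every $\varepsilon>0$; condition~(2) is plainly stronger. For the converse implication I would use symmetry of $\phi_\rho$ and the triangle inequality to observe that $A_\varepsilon$ is symmetric and $A_\varepsilon-A_\varepsilon\subseteq A_{2\varepsilon}$, and then invoke Steinhaus's theorem on a compact group: the difference set of a set of positive Haar measure contains an open neighborhood of the identity. Consequently $A_{2\varepsilon}$ itself contains a neighborhood of $0$ and therefore meets every neighborhood $V$ of $0$ in a set of positive measure; since $\varepsilon$ is arbitrary, $\operatorname{ess\,inf}_{g\in V\setminus 0}\phi_\rho(g)=0$.

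For the equivalence with condition~(1), I would appeal to Lemma~1, which, under Condition~1 of Definition~1 (a hypothesis of the present theorem), identifies admissibility with purity of $\rho$ viewed as a function of two variables. By translation invariance, the equality $\rho(x,z)=\rho(y,z)$ for $\mu$-a.e.~$z$ is equivalent to $\phi_\rho(w)=\phi_\rho(w+(y-x))$ for $\mu$-a.e.~$w$, so the purity class of $x$ is the coset $x+H$ with
$$H=\{g\in G:\phi_\rho(\cdot+g)=\phi_\rho(\cdot)\ \mu\text{-a.e.}\}.$$
By Fubini applied to $(g,u)\mapsto\mathbf{1}[\phi_\rho(u+g)\ne\phi_\rho(u)]$ the set $H$ is measurable, and it is obviously a subgroup. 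Purity is the assertion $\mu(H)=0$, which, rephrased in terms of the complement, is exactly condition~(1).

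The only step that is more than routine bookkeeping is the application of Steinhaus's theorem; everything else is a matter of translating between $\rho$ and $\phi_\rho$ and citing the earlier Proposition~1 and Lemma~1. I do not anticipate a serious obstacle.
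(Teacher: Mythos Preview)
Your proposal is correct and follows the paper's overall strategy: reduce both conditions to statements about $\phi_\rho$, then invoke Lemma~1 for condition~(1) and Proposition~1 for condition~(2). The paper's proof does exactly this, asserting that ``Condition~1 is exactly equivalent to the condition of Lemma~1'' and that sufficiency of Condition~2 ``follows from Proposition~1.''

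There are two differences worth noting. First, for the necessity of condition~(1) the paper does not rest on Lemma~1 alone but supplies a separate \emph{direct} argument exploiting that $G$ is monothetic: if $\mu(H)>0$ then $H$ contains a topological generator of $G$, forcing $\phi_\rho$ to be constant a.e.\ and hence $\rho$ to be the (non-admissible) constant metric. You omit this and rely purely on the identification ``purity $\Leftrightarrow \mu(H)=0$''; your version is tidier, but the paper's direct argument is a useful supplement since it makes the role of the monothetic hypothesis explicit. Second, for the necessity of condition~(2) the paper argues by contradiction via separability (a positive essential infimum would produce a continuum of uniformly separated points), whereas you use Steinhaus's theorem to show that $A_{2\varepsilon}\supseteq A_\varepsilon-A_\varepsilon$ contains a neighborhood of $0$. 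Your route is cleaner and in fact proves more: it yields immediately that the $\hat\rho$-topology coincides with the standard topology on $G$, which is precisely the Steinhaus step the paper postpones to the proof of Theorem~5. So your argument effectively absorbs the key topological lemma of the main theorem into the proof of Theorem~3.
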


\begin{proof}
Condition~1 is exactly equivalent to the condition
of Lemma~1. It is useful to give a direct proof that this condition is
necessary. Assume that it is not satisfied, i.e., for all
elements $g$ from some set of positive Haar measure,
$\phi_{\rho}(z)=\phi_{\rho}(g+z)$ for almost all $z$. Therefore, the
measurable function $\phi$ is constant on cosets of the subgroup
generated by $g$. However, every set of positive Haar measure in a
nondiscrete Abelian compact group on which there is an ergodic
translation
contains an element $g$ that generates a dense cyclic subgroup. Then,
since the function $\phi_{\rho}$ is measurable,
it follows that it is constant almost everywhere. But if the function
$\phi_{\rho}$ is constant, then the metric $\rho$ is also constant
and hence not admissible.

Let us prove that Condition~2 of the theorem is necessary. Consider
the function $\phi_{\rho}$ for an invariant admissible metric
$\rho$. Assume that $\theta=\operatorname{ess\,inf}$ from Condition~2 is positive; then, using the
invariance of $\rho$, we can construct a continuum of points lying
at a fixed positive $\rho$-distance greater than $\theta$,
which contradicts the admissibility. For the same reason, the set of values
of  $\phi_{\rho}$ is dense in some neighborhood of the
zero. The fact that Condition~2 is sufficient follows from Proposition~1.
\end{proof}

The last assertion implies the following corollary.

\begin{corollary}
For every admissible invariant metric there exists a sequence of
group elements that converges to zero both in the standard topology and
with respect to the (semi)metric.
\end{corollary}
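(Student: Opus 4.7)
The plan is to read off the desired sequence directly from Condition~2 of Theorem~3, using the fact that a compact monothetic group carries a countable neighborhood basis at the identity.

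First I would observe that by the discussion following Proposition~2, the ambient group $G$ is a compact Polish Abelian group; in particular the standard topology at $\mathbf{0}$ has a countable neighborhood basis $V_1 \supset V_2 \supset \dots$ with $\bigcap_n V_n = \{\mathbf{0}\}$. Condition~2 of Theorem~3 applied to each $V_n$ gives
$$\operatorname{ess\,inf}_{g \in V_n \setminus \mathbf{0}} \phi_\rho(g) = 0,$$
which by the definition of essential infimum means that for every $n$ the set
$$A_n = \{g \in V_n \setminus \mathbf{0} : \phi_\rho(g) < 1/n\}$$
has positive Haar measure, and in particular is nonempty once $\phi_\rho$ is identified with a pointwise-defined representative on a set of full measure.

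Next I would pick any $g_n \in A_n$ (for instance using measurable selection, or simply the axiom of choice, since we only need a single element from each nonempty set). By construction $g_n \in V_n$, so $g_n \to \mathbf{0}$ in the standard topology, and $\rho(\mathbf{0}, g_n) = \phi_\rho(g_n) < 1/n$, so $g_n \to \mathbf{0}$ with respect to $\rho$ as well. This gives the sequence required by the corollary.

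The only mildly delicate point — and the one I would flag as the main obstacle, though it is minor — is the passage from the essential infimum statement to the choice of actual points $g_n$ at which $\phi_\rho$ is defined and small. This is handled by fixing once and for all a Borel representative of $\phi_\rho$ defined on a conull subgroup-invariant subset $G'\subset G$ (which exists because $\phi_\rho$ is measurable and the Haar measure is translation invariant), and taking $A_n$ inside $G'$; the positivity of Haar measure of $A_n$ then guarantees nonemptiness and legitimizes the pointwise evaluation $\phi_\rho(g_n) < 1/n$.
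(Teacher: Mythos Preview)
Your proposal is correct and follows exactly the route the paper intends: the paper offers no separate proof of the corollary, merely noting that ``the last assertion implies'' it, and your argument is precisely the natural unpacking of Condition~2 of Theorem~3 via a countable neighborhood basis at~$\mathbf{0}$. The care you take with the passage from essential infimum to pointwise evaluation is appropriate and not something the paper spells out.
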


\subsection{Admissibility of the average metric}

Now we can explicitly write down the condition that guarantees the admissibility of the
average, i.e., invariant, metric in terms of the original metric.
Consider an arbitrary measurable (non necessarily admissible) metric
$\rho$ on a compact commutative group $G$ and an ergodic translation $T$ by
an element $g$. Let us write down an expression for the average metric:
\begin{eqnarray*}
{\hat \rho}^T(x,y)&=&\lim_{n\to \infty} \frac{1}{n}\sum_{k=0}^n \rho
(x+kg,y+kg)=\int_G \rho(g+x,g+y)dm(g)\\
&=&\int_G \rho(z,z+y-x)dm(z)
\end{eqnarray*}
(we have used the fact that the measure $m$ is invariant). Hence the
function
$\phi(r)$, regarded as a function of $r$, is measurable and has the form
$$ \phi(r)=\int_G \rho(z,z+r)dm(z)=\hat \rho(x,y);\quad y-x=r.$$

Obviously, $\hat \rho$ is a measurable function on the group
$G\times G$. Now we can check the admissibility condition for the
average metric $\hat \rho$.

\begin{definition}
We say that an admissible metric $\rho$ on a compact commutative group
$G$ with Haar measure $m$ is semicontinuous at zero in mean if
$$\liminf_{r\to 0}\int_G \rho(x,x+r)dm(x)=0;$$
we say that it is semicontinuous at zero in measure if the
following condition holds (in which ``meas'' means convergence in
measure):
$$\liminf_{r\to 0}(meas) \rho(x,x+r)=0.$$
\end{definition}

Note that the second condition follows from the first one, and that
both conditions are stated in purely group terms, i.e., do not depend
on the particular translation $T$.

Thus we have the following admissibility criterion for the average
metric.

\begin{statement}
The average metric $\hat\rho^T$ for an ergodic translation $T$ on a compact
commutative group $G$ is admissible if and only if the original
(semi)metric $\rho$ is admissible and semicontinuous at
zero in mean.
\end{statement}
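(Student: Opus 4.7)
The strategy is to exploit the explicit formula for $\hat\rho$ derived in the paragraph immediately preceding the proposition,
$$\hat\rho^T(x,y)=\phi(y-x),\qquad \phi(r)=\int_G \rho(z,z+r)\,dm(z),$$
which shows that $\hat\rho$ is a translation-invariant measurable (semi)metric on $G$. Once this is in hand, Theorem~3 provides an admissibility criterion for invariant metrics, and the task reduces to matching that criterion against the semicontinuity-in-mean condition.

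First I would verify the ``book-keeping'' step: $\hat\rho$ satisfies Condition~1 of Definition~1. Symmetry and non-negativity pass directly under integration; the triangle inequality for $\rho$ integrates to the triangle inequality for $\phi$ (equivalently, for $\hat\rho$); and Fubini plus invariance of Haar measure gives $\int\int \hat\rho\,dm\otimes dm=\int\int\rho\,dm\otimes dm<\infty$. Hence Theorem~3 is applicable to $\hat\rho$, and by its second condition, $\hat\rho$ is admissible if and only if $\operatorname{ess\,inf}_{g\in V\setminus 0}\phi(g)=0$ for every neighborhood $V$ of the zero of $G$.

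For sufficiency, assume $\rho$ is admissible and semicontinuous at zero in mean. Then $\liminf_{r\to 0}\phi(r)=0$, so I can extract a sequence $r_n\to 0$ in $G$ with $\phi(r_n)\to 0$; since infinitely many $r_n$ fall in any prescribed neighborhood $V$ of $0$, the essential infimum of $\phi$ over $V\setminus 0$ must vanish. Theorem~3 then yields admissibility of $\hat\rho$. For necessity, I would run the argument in reverse: if $\hat\rho$ is admissible, Theorem~3 gives $\operatorname{ess\,inf}_{g\in V\setminus 0}\phi(g)=0$ for every neighborhood $V$ of $0$; choosing a countable basis $V_n$ shrinking to $0$ and, for each $n$, a point $r_n\in V_n$ with $\phi(r_n)<1/n$, I obtain a sequence realizing $\liminf_{r\to 0}\phi(r)=0$, which is exactly semicontinuity at zero in mean. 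Admissibility of $\rho$ itself is a standing input of the proposition (and is moreover implicit in the measurability and finite-integrability that allow $\phi$ to be defined and Theorem~3 to be applied).

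The only genuinely subtle point, and therefore the main place where I would slow down, is the interpretation of $\liminf_{r\to 0}\phi(r)$ for the a.e.-defined integral $\phi$: the value of $\phi$ at a single point is meaningless, so ``$\liminf_{r\to 0}$'' must be read as the essential liminf $\lim_{V\to\{0\}}\operatorname{ess\,inf}_{r\in V}\phi(r)$. This is precisely the quantity controlled by Theorem~3, and once that identification is made explicit, the equivalence is a one-line translation. No further machinery beyond Fubini and Theorem~3 is needed.
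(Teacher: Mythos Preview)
Your proposal is correct and follows essentially the same route as the paper: identify $\hat\rho$ as the translation-invariant metric with $\phi_{\hat\rho}(r)=\int_G\rho(z,z+r)\,dm(z)$, then invoke the admissibility criterion of Theorem~3 (Condition~2) to translate admissibility of $\hat\rho$ into the vanishing of $\operatorname{ess\,inf}_{g\in V\setminus 0}\phi(g)$, which is exactly semicontinuity at zero in mean. The paper's own proof is a two-line pointer to this identification and to ``the previous proposition,'' so your write-up simply unpacks what the paper leaves implicit; your remark about reading $\liminf_{r\to 0}\phi(r)$ as an essential liminf is a welcome clarification that the paper glosses over.
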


\begin{proof}
The ``if'' part is proved above; the ``only if'' part follows from the
equality
$\phi_{\hat\rho}(r)=\int_G \rho(x,x+r)dm(x)$ and the previous
proposition.
\end{proof}

Now we are ready to formulate and prove the following important fact.

\begin{theorem}
For every bounded admissible metric on a compact
commutative group $G$, the average metric is admissible.
\end{theorem}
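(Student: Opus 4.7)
The plan is to invoke Proposition~2 (the admissibility criterion for the average metric under an ergodic translation on a compact abelian group): it suffices to show that $\rho$ is semicontinuous at zero in mean, i.e., $\liminf_{r\to 0}\phi(r)=0$, where $\phi(r):=\int_G\rho(x,x+r)\,dm(x)$. I will in fact establish the stronger statement $\limsup_{r\to 0}\phi(r)=0$ with respect to the group topology on $G$.

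Fix $\varepsilon,\delta>0$ and let $C$ be an $L^\infty$-bound on $\rho$. Since $\rho$ is admissible, $(G,\rho)$ is (after passage to a conull subset) a Polish space on which $m$ is a Borel probability measure, so by inner regularity there is a $\rho$-compact $K\subset G$ with $m(K)>1-\delta$. As $K$ is $\rho$-totally bounded, finitely many open $\rho$-balls $B_i:=\{y:\rho(x_i,y)<\varepsilon/2\}$, $i=1,\dots,n$, cover $K$, whence $m(\bigcup_i B_i)>1-\delta$. If $u\in B_i\cap(B_i-v)$ for some $i$, then $u$ and $u+v$ both lie in $B_i$, and the triangle inequality gives $\rho(u,u+v)<\varepsilon$. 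Setting $A_v:=\bigcup_i\bigl(B_i\cap(B_i-v)\bigr)$, we therefore obtain
$$\phi(v)\le\varepsilon\cdot m(A_v)+C\cdot\bigl(1-m(A_v)\bigr)\le\varepsilon+C\bigl(1-m(A_v)\bigr).$$

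It remains to show that $m(A_v)\to m(\bigcup_i B_i)$ as $v\to 0$ in the group topology. This is the only place the ambient group structure enters: for each measurable $B_i$, continuity of Haar translation in $L^1(G,m)$ forces $m(B_i\triangle(B_i-v))\to 0$, hence $m(B_i\setminus(B_i-v))\to 0$; summing over the finitely many indices $i$ yields $m(\bigcup_i B_i\setminus A_v)\to 0$. So one obtains a group-neighborhood $V$ of $0$ on which $m(A_v)>1-2\delta$, and therefore $\phi(v)\le\varepsilon+2C\delta$ for every $v\in V$. Since $\varepsilon,\delta$ were arbitrary, $\phi(v)\to 0$ as $v\to 0$ in $G$, and Proposition~2 closes the argument.

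The conceptual difficulty is that $\rho$ lives a priori only in the measure-theoretic category and its Polish $\rho$-topology is in general unrelated to the group topology of $G$, yet the statement mixes both. Admissibility is used to extract a \emph{finite} covering of a large-measure set by $\rho$-small balls, while $L^1$-continuity of Haar translation, which depends only on measurability of the $B_i$ and not on any $\rho$-regularity, transports this estimate into the group topology. Boundedness of $\rho$ is essential for controlling the contribution of the exceptional set $G\setminus A_v$ to $\phi(v)$; without it, the finitely-many-balls argument would not close, which is exactly why the hypothesis "bounded" cannot be dispensed with here.
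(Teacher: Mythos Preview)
Your argument is correct. (A minor numbering slip: the criterion you invoke is Proposition~3 in the paper, not Proposition~2.) One technical point worth making explicit is that the $\rho$-balls $B_i$ are Haar-measurable: since $\rho$ is $(m\times m)$-measurable, for $m$-almost every center $x_i$ the slice $y\mapsto\rho(x_i,y)$ is measurable, and you may choose the centers in this conull set; this is needed for the $L^1$-continuity step.

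Both the paper and you reduce the theorem, via Proposition~3, to the semicontinuity at zero of $\phi(r)=\int_G\rho(x,x+r)\,dm(x)$, but the arguments for that reduction are genuinely different. The paper argues by contradiction: if $\liminf_{r\to 0}\phi(r)>c>0$, then (using the bound $\rho\le 1$) for each small $r$ a set of $x$'s of measure at least $c/2$ satisfies $\rho(x,x+r)>c/2$; a compactness/Fubini step then produces a positive-measure set of $x$ for which this holds for a positive-measure set of small $r$, and this is shown to clash with the nondegeneracy form of admissibility (Proposition~1). Your route is direct and structurally cleaner: admissibility supplies, via tightness in the Polish $\rho$-topology, a finite cover of a large-measure set by small $\rho$-balls, and then $L^1$-continuity of Haar translation---which requires only measurability of the $B_i$, not any compatibility between $\rho$ and the group topology---shows that each ball is almost invariant under small group shifts. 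This yields the stronger conclusion $\lim_{r\to 0}\phi(r)=0$ rather than merely $\liminf=0$, and it isolates exactly where boundedness enters (controlling the complement of $A_v$). The paper's approach, by contrast, keeps the argument entirely inside the metric/measure framework without invoking the translation-continuity lemma, at the cost of a somewhat sketchier contradiction step.
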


\begin{proof}
Assume that the metric $\rho$ is not semicontinuous at zero and there
exists a positive number
$c>0$ such that $$\liminf_{r\to 0}\int_G \rho(x,x+r)dm(x)> c.$$
Assume that the metric is normalized so that the diameter of the space $X$ is equal to 1. Then it
follows from our assumption that for sufficiently small (i.e.,
belonging to a small neighborhood of the zero in the group $G$) $r$ there
exists a (depending on $r$) subset in $X$ of measure $\alpha$,
which does not depend on $r$ and is greater than
$\frac{c}{2}$, on which $\rho(x,x+r)>\frac{c}{2}$.
Indeed, $1 \cdot  \alpha +(1-\alpha)\frac{c}{2}>\int_G \rho(x,x+r)dm(x)> c$.
But since the group is compact, there is a set of positive
measure for all points $x$ of which the inequality
$\rho(x,x+r)>\frac{c}{2}$ holds for all sufficiently small $r$ from
some set of positive measure. This in turn contradicts the
admissibility of the metric $\rho$ in the formulation of
Proposition~1: arbitrarily small values
$\phi(r)$ for small $r$ cannot interlace with values greater than
$\frac{c}{2}$ by the triangle inequality. Thus we have proved that
$\rho$ is semicontinuous.
\end{proof}

\medskip\noindent{\bf Question.}
Does there exist an unbounded admissible metric
$\rho$ on the circle $S^1={\Bbb R}/{\Bbb Z}$ for which the average metric
  $\hat \rho$ is not admissible? Does there exist an unbounded
$p$-admissible metric, with $1<p<\infty$, on a compact
Abelian group for which the average metric is not
admissible?\footnote{While the paper was in press, F.~Petrov and
P.~Zatitskiy proved that the average of any (in
particular, unbounded) admissible metric on the circle
is admissible. Thus the additional
assumptions
that the average metric is admissible in Proposition~2 and Theorem~5
are superfluous.}

\section{Criterion for the discreteness and continuity of the spectrum in
terms of the scaling entropy}

Now we formulate our main result.

\begin{theorem}
For an ergodic automorphism with discrete spectrum realized as a
translation
on a compact commutative group with an arbitrary bounded admissible
metric, or, more generally, with a metric for which the average metric
is admissible, the scaling sequence is bounded.
\end{theorem}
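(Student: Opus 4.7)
The task reduces to showing that $\limsup_{n\to\infty}H_{\hat\rho_n,\varepsilon}(\mu)<\infty$ for every small $\varepsilon>0$. By hypothesis---directly, or through Theorem~5 in the bounded admissible case---the limit metric $\hat\rho$ is admissible on $(X,\mu)$, so $H'_{\hat\rho,\eta}(\mu)<\infty$ for every $\eta>0$ (admissible metrics on a Polish space are separable mod measure). Fix such an $\eta$ and choose a finite net $\{x_1,\dots,x_k\}$ together with a measurable $X'\subset X$ of $\mu$-measure exceeding $1-\eta$ such that every point of $X'$ is within $\hat\rho$-distance $\eta$ of some $x_i$. The plan is to show that this very same $k$-point net is, at scale $2\eta$, an approximate net for $\hat\rho_n$ as soon as $n$ is large, and then to invoke Lemma~2 to convert the resulting bound on $H'$ into one on $H$.

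The engine of the transfer is the pointwise ergodic theorem applied to $T\times T$ on $(X\times X,\mu\times\mu)$ and the $L^1$ function $\rho$: the averages $\hat\rho_n$ converge to $\hat\rho$ pointwise $(\mu\times\mu)$-almost everywhere. By Fubini, for $\mu$-almost every $x$ one has $\hat\rho_n(x,\,\cdot\,)\to\hat\rho(x,\,\cdot\,)$ pointwise $\mu$-almost everywhere, and hence in $\mu$-measure. Since the set of such generic $x$'s has full measure, the net points $x_1,\dots,x_k$ may be selected inside it. For each $i$, pick $N_i$ with
$$\mu\{y:|\hat\rho_n(x_i,y)-\hat\rho(x_i,y)|>\eta\}<\eta/k\quad\text{for all }n\ge N_i,$$
and set $N=\max_i N_i$. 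Then for $n\ge N$ the set $Y_n=\bigcap_{i=1}^k\{y:|\hat\rho_n(x_i,y)-\hat\rho(x_i,y)|\le\eta\}$ has $\mu$-measure greater than $1-\eta$.

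For $y\in X'\cap Y_n$---a set of $\mu$-measure exceeding $1-2\eta$---some $i$ satisfies $\hat\rho(x_i,y)<\eta$, and the defining property of $Y_n$ then forces $\hat\rho_n(x_i,y)<2\eta$. Hence $H'_{\hat\rho_n,2\eta}(\mu)\le\ln k$ for all $n\ge N$, and Lemma~2 together with the uniform boundedness of $\hat\rho_n$ by the diameter $d$ of $\rho$ upgrades this to $H_{\hat\rho_n,2(d+1)\eta}(\mu)\le\ln k$. Since $\eta>0$ is arbitrary, $\limsup_n H_{\hat\rho_n,\varepsilon}(\mu)<\infty$ for every $\varepsilon>0$, so ${\cal H}_{\rho,\varepsilon}(T)$ contains a bounded scaling sequence. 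The delicate step is precisely the passage from $(\mu\times\mu)$-a.e.\ convergence of $\hat\rho_n$ to simultaneous control of the $k$ slice functions $\hat\rho_n(x_i,\,\cdot\,)$; once Fubini combined with the finiteness of the net handles this, everything else is formal, and this is essentially the only place where the admissibility of $\hat\rho$ is used beyond the mere existence of an initial net.
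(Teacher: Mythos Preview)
Your argument is correct and shares the overall architecture of the paper's proof: obtain a finite net from the admissibility of $\hat\rho$, transfer it to $\hat\rho_n$ via the pointwise ergodic theorem, and finish with the $H'$-to-$H$ inequality. The one substantive difference lies in how the finite net is produced. You use only the Polish/tightness consequence of admissibility to conclude $H'_{\hat\rho,\eta}(\mu)<\infty$, whereas the paper exploits the group structure further: since the $\hat\rho$-ball $B_\varepsilon=\{g:\phi_{\hat\rho}(g)<\varepsilon\}$ has positive Haar measure, the Steinhaus--Weil fact that $B_\varepsilon+B_\varepsilon$ contains a standard neighbourhood of $0$, combined with $B_\varepsilon+B_\varepsilon\subset B_{2\varepsilon}$, shows that the $\hat\rho$-topology \emph{coincides} with the compact group topology, so $(G,\hat\rho)$ is genuinely compact and admits a finite $\varepsilon$-net covering all of $G$. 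What the paper gains is a structural statement about the limiting metric; what you gain is a proof that never invokes the group structure after admissibility of $\hat\rho$ is granted, which matches the spirit of the ``more generally'' clause and makes the transfer step (Fubini plus choosing the finitely many net points to be generic for the slice convergence) more explicit than the paper's one-line appeal to convergence of net sizes. Two small remarks: your internal references are off by one (the admissibility of the average in the bounded case is Theorem~4, and the $H'$-to-$H$ inequality is Lemma~3), and your invocation of that lemma requires $\hat\rho_n$ to have finite diameter, which is clear when $\rho$ is bounded but not literally given in the general clause---the paper's proof is equally terse on this point.
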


\begin{proof}
Since the set $B_{\varepsilon}=\phi^{-1}([0,\varepsilon])$
is, by definition, the ball of radius
$\varepsilon$ centered at
$\textbf{0}\in G $ in the metric  $\hat \rho$, which is admissible by
assumption, it follows that $m B_{\varepsilon}>0$, since the metric is
nondegenerate (see the definition of an admissible metric). But the sum
$B_{\varepsilon}+B_{\varepsilon}$, like the sum $A+A$ for every set $A$ of
positive Haar measure in a locally compact group,
contains a neighborhood $V$ of the zero in the standard topology (see, e.g.,
\cite{We}).
It follows from the triangle inequality that
$B_{\varepsilon}+B_{\varepsilon}\subset B_{2\varepsilon}$,
so that $V \subset B_{2\varepsilon}$. Since $\varepsilon>0$  is
arbitrary, we see that
{\it the topology on $G$ determined by the average metric $\hat\rho$
coincides with the standard topology},
i.e.,  in the topology determined by
$\hat \rho$, the group $G$ is compact and contains a finite $\varepsilon$-net for every $\varepsilon$.

The pointwise a.e. convergence
 $$\lim_{n \to \infty} \frac{1}{n}\sum_{k=0}^{n-1} \rho(x+kg,y+kg)={\hat \rho}_T(x,y),$$
which follows from the pointwise
ergodic theorem, implies that the number of points in an $\varepsilon$-net for $(G,\rho_n)$
tends to the number of points in an  $\varepsilon$-net for
$(G,\hat \rho)$. This means that the sequence $H_{\rho_n,\varepsilon}(X)$
converges to
$H_{\hat \rho,\varepsilon}(G)$. From the inequalities of Lemma~3
we see that the sequence
$H_{\rho_n,\varepsilon}(\mu)$ is bounded as $n \to \infty$,
and thus the scaling sequence for the automorphism $T$, which acts on
the metric triple
$(G,\rho, m)$, is bounded.
\end{proof}

Combining this theorem with the previous one, we obtain the following result.

\begin{theorem}
An ergodic automorphism $T$ has a discrete spectrum if and only if
the scaling sequence for $T$ is bounded for some, and hence for every, bounded
admissible metric.
\end{theorem}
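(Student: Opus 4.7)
\medskip\noindent\textbf{Proof strategy.} The plan is to prove the two implications separately. The ``only if'' direction essentially repackages Theorems~5 and~6 after reducing to a translation on a compact group. The ``if'' direction requires a new ingredient: from the uniform bound on the $\varepsilon$-entropies of the averaged metrics $\hat\rho_n$ one must extract admissibility of the limit metric $\hat\rho$, after which Proposition~2 produces the discrete spectrum.

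For the ``only if'' direction, suppose $T$ has purely discrete spectrum. By the von Neumann--Halmos theorem recalled in Section~5, $T$ is metrically isomorphic to an ergodic translation on a compact commutative group $G$ with Haar measure. Transport the given bounded admissible metric $\rho$ to $G$. Theorem~5 guarantees that the average metric $\hat\rho$ is admissible, and Theorem~6 then yields boundedness of $H_{\hat\rho_n,\varepsilon}(\mu)$ for every $\varepsilon>0$, i.e., boundedness of the scaling sequence.

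For the ``if'' direction, assume that $\rho$ is a bounded admissible metric for which $H_{\hat\rho_n,\varepsilon}(\mu)\le M(\varepsilon)<\infty$ uniformly in $n$, for every $\varepsilon>0$. Boundedness of $\rho$ and the pointwise ergodic theorem give $\hat\rho_n\to\hat\rho$ both almost everywhere and in $L^1(X\times X,\mu\times\mu)$; the limit $\hat\rho$ is $(T\times T)$-invariant and is a genuine metric by Theorem~3. The crucial step is to show that $\hat\rho$ is admissible. For each fixed $\varepsilon>0$, pick discrete measures $\nu_n$ with $k_{\hat\rho_n}(\nu_n,\mu)<\varepsilon$ and $H(\nu_n)\le M(\varepsilon)$; the entropy bound confines most of the mass of each $\nu_n$ to a uniformly bounded number of atoms. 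Pass to a weak subsequential limit $\nu$ and use the $L^1$-convergence $\hat\rho_n\to\hat\rho$ to transfer Kantorovich cost from $\hat\rho_n$ to $\hat\rho$, obtaining $k_{\hat\rho}(\nu,\mu)\le\varepsilon$ and $H(\nu)\le M(\varepsilon)$. Hence $H_{\hat\rho,\varepsilon}(\mu)<\infty$ for every $\varepsilon$, so the metric space $(X,\hat\rho)$ is separable with $\mu$ concentrated on a $\sigma$-compact set, which by Proposition~1 is exactly the admissibility of $\hat\rho$. Proposition~2 now applies: the existence of a $T$-invariant admissible metric forces $T$ to be isomorphic to a translation on a compact commutative group, so its spectrum is purely discrete.

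The main obstacle is the passage from the uniform $\varepsilon$-entropy bound on the varying metrics $\hat\rho_n$ to admissibility of the limit $\hat\rho$, since one must keep the Kantorovich approximation under control while both the metric and the approximating measure move along the sequence. I expect to handle this by combining the natural Lipschitz dependence of the Kantorovich functional on the cost (in $L^1$ norm against any coupling) with weak compactness of the family of discrete probability measures of entropy at most $M(\varepsilon)$, restricted to a compact set of $\mu$-mass close to $1$ supplied by Proposition~1 applied to $\rho$. Once this admissibility lemma is in place, the rest of the argument is a routine assembly of the results already proved in the paper.
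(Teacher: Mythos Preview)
Your strategy matches the paper's proof almost exactly. Both directions are organized the same way: the ``only if'' part is assembled from the von Neumann realization plus the two preceding theorems (bounded admissible $\Rightarrow$ average admissible, and average admissible $\Rightarrow$ bounded scaling); the ``if'' part proceeds by showing that the limit metric $\hat\rho$ is admissible and then invoking Proposition~2 to conclude discrete spectrum.

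The only substantive difference is in how much work goes into the admissibility of $\hat\rho$. The paper is extremely terse here: it simply writes ``Consequently, the space $(X,\hat\rho)$ is precompact, and hence the metric $\hat\rho$ is admissible,'' without explaining how the uniform bound on $H_{\hat\rho_n,\varepsilon}(\mu)$ transfers to the limit metric. You correctly flag this as the main obstacle and sketch a genuine argument (entropy-bounded discrete approximants, weak subsequential limits, continuity of Kantorovich cost in the $L^1$-norm of the metric). That is more machinery than the paper deploys---the paper essentially treats the step as obvious---so your proposal is, if anything, more rigorous at exactly the point where the paper is elliptical. A lighter route than the Kantorovich/weak-limit argument would be to work with the $H'$ version of $\varepsilon$-entropy and pass the finite $\varepsilon$-nets from $\hat\rho_n$ to $\hat\rho$ via a.e.\ (or Egorov-uniform) convergence on a large set, but your plan is sound. (Minor point: your theorem numbers are shifted by one relative to the paper; what you call Theorems~3,~5,~6 are the paper's Theorems~2,~4,~5.)
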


\begin{proof}
Above we have proved that if an automorphism has a discrete spectrum
and the average metric is admissible, then the scaling
sequence is bounded. But the average metric is always admissible
provided that the original metric is bounded and admissible.

Assume that the scaling sequence is bounded for an automorphism $T$
and an admissible metric
$\rho$.  Recall that the average metric is indeed a metric (and not a
semimetric). Consequently, the space
$(X,\hat \rho)$ is precompact, and hence the metric
$\hat \rho$ is admissible. Since it is $T$-invariant, it follows from
Theorem~1 that $T$ has a purely discrete spectrum.
\end{proof}

Combining the last theorems
with the previous results yields a criterion for the
discreteness and continuity of the spectrum in terms of the
automorphism $T$ and an arbitrary admissible metric.

\begin{theorem}
Let $T$ be an ergodic automorphism, and let
$\rho$ be a bounded admissible (semi)metric. If the corresponding
scaling sequence is not bounded, then the spectrum of $T$
contains a continuous component. If the
scaling sequence is not bounded for every admissible (semi)metric,
then the spectrum of $T$ is purely continuous.
\end{theorem}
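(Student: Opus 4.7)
The plan for the first assertion is immediate: it is just the contrapositive of one half of Theorem~6. If the spectrum of $T$ were purely discrete, Theorem~6 would force the scaling sequence to be bounded for every bounded admissible metric, contradicting the hypothesis. So the substantive task is the second assertion, which I would prove by contrapositive: assuming that the spectrum of $T$ contains a discrete component, I will exhibit a single admissible (semi)metric on $X$ whose scaling sequence is bounded.

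The natural candidate is the pullback from the Kronecker factor. Let $(X_0,\mu_0,T_0)$ denote the maximal factor of $T$ with purely discrete spectrum --- nontrivial by hypothesis --- and let $\pi:X\to X_0$ be the factor map. By the von Neumann theorem (already invoked in Proposition~2), $T_0$ is isomorphic to an ergodic translation on a compact Abelian group. Pick any bounded admissible metric $\rho_0$ on $X_0$ and set
$$\rho(x,y) := \rho_0(\pi x,\pi y).$$
This is a bounded admissible semimetric on $(X,\mu)$: Condition~1) of Definition~1 is immediate, and the quotient space $X_\rho$ coincides $\bmod 0$ with the Polish space $X_0$, which yields Condition~2). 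The intertwining $\pi T=T_0\pi$ gives
$$\hat\rho_n(x,y) = (\widehat{\rho_0})_n(\pi x,\pi y),$$
so $H_{\hat\rho_n,\varepsilon}(\mu) = H_{(\widehat{\rho_0})_n,\varepsilon}(\mu_0)$ for every $\varepsilon>0$. By Theorem~5 applied to $T_0$ with the bounded admissible metric $\rho_0$, the right-hand side is bounded in $n$, so the scaling sequence of $(T,\rho)$ is bounded, as required.

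The only real obstacle is technical bookkeeping: verifying that the pulled-back semimetric truly satisfies Definition~1, and that $\varepsilon$-entropy transfers cleanly through the factor map. Both facts rest on the observation that an $\varepsilon$-net for $\mu_0$ in $(X_0,(\widehat{\rho_0})_n)$ lifts, via $\pi^{-1}$, to a family of the same cardinality whose $\varepsilon$-balls cover a set of the same measure in $(X,\hat\rho_n,\mu)$. These checks could equally well be packaged into a preliminary lemma on compatibility of scaling entropy with factor maps if one wanted a fully formal presentation.
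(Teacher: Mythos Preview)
Your proposal is correct. The paper itself gives no separate proof of this theorem; it is stated as an immediate corollary of Theorems~5 and~6 together with Proposition~2, and your argument is exactly the natural way to unpack that corollary. The first assertion is indeed the contrapositive of one direction of Theorem~6, and for the second assertion your passage to the Kronecker factor is precisely the skew-product base that appears in the proof of Proposition~2. One small simplification worth noting: if you choose $\rho_0$ to be a \emph{translation-invariant} admissible metric on the compact group $X_0$ (such metrics exist and determine the standard topology, as recalled in Section~5.1), then the pulled-back semimetric $\rho$ is already $T$-invariant, so $\hat\rho_n=\rho$ for every $n$ and the boundedness of the scaling sequence is immediate --- you do not even need to invoke Theorem~5.
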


In the next section, we will show how one could apply this criterion.

\section{Comparison with the traditional approach, the Pascal
automorphism, and concluding remarks}

\subsection{Supremum metrics}

The entropy theory of dynamical systems, developed mainly by
Kolmogorov, Sinai, and Rokhlin,  essentially uses the tools of the
theory of measurable partitions. In Sinai's definition, the entropy
appears as an asymptotic invariant of the dynamics of finite
partitions under the automorphism:
 $$\lim_n\frac{H(\bigvee_{k=0}^{n-1} T^k\xi)}n=h(T,\xi).$$

As a result of this theory, the study of the class of automorphisms with completely
positive entropy was differentiated into a separate field, whose methods do
not apply to automorphisms with zero entropy. For example,
one cannot obtain a new invariant for such
automorphisms following the same
scheme. This can be seen from the following simple fact.

\begin{statement}
 For every transformation $T$ and every increasing sequence of positive
numbers $\{c_n,\, n\in \Bbb N\}$ satisfying the condition
$lim_n\frac{c_n}{n}=0$, there exists a generating partition
$\xi$ such that
 $$\lim_n\frac{H(\bigvee_{k=1}^{n} T^k\xi)}{c_n}=\infty.$$
  \end{statement}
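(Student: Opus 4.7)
The plan is to start from a finite generator $\eta$ of $T$ and build $\xi\succeq\eta$ by inserting, on a sparse sequence of Rokhlin towers, combinatorial ``noise'' that pumps up the iterated entropy at every scale while keeping $H(\xi)$ finite. Two preliminary reductions: if $h(T)=\infty$, then by Sinai's theorem every generator $\xi$ satisfies $H(\xi)\ge h(T,\xi)=\infty$, hence $H(\bigvee_{k=1}^{n}T^{k}\xi)=\infty$ for every $n\ge 1$ and the statement holds trivially; so assume $h(T)<\infty$ and, to simplify notation, $T$ ergodic (the non-ergodic case follows by working on ergodic components). Fix, via Krieger's theorem, a finite generator $\eta$. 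Also produce an auxiliary non-decreasing sequence $b_n$ with $b_n/c_n\to\infty$ and $b_n/n\to 0$; a convenient choice is $b_n=\max_{m\le n}c_m\log(m/c_m)$, which works because $c_n=o(n)$ forces $\log(n/c_n)\to\infty$ while $c_n\log(n/c_n)/n\to 0$.

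\textbf{Construction.} Choose integers $N_1<N_2<\cdots$ growing so quickly that $\sum_{k}(\log N_k+b_{N_k})/N_k<\infty$ and $b_n\le 2b_{N_k}$ for all $n\in[N_k,N_{k+1})$ (adding intermediate scales is harmless). For each $k$ apply the Rokhlin--Halmos lemma to obtain a tower $R_k=\bigsqcup_{j=0}^{N_k-1}T^{j}B_k$ with $\mu(B_k)=1/N_k$ and $\mu(R_k)\ge 1-2^{-k}$. Partition $B_k$ into $L_k:=\lceil e^{b_{N_k}}\rceil$ cells of equal measure, and let $\zeta_k$ be the partition of $X$ consisting of these $L_k$ cells together with the single extra atom $X\setminus B_k$. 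Define $\xi:=\eta\vee\bigvee_{k\ge 1}\zeta_k$.

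\textbf{Entropy bookkeeping.} A direct calculation gives $H(\zeta_k)=(\log N_k+b_{N_k})/N_k+O(1/N_k)$, so the choice of $N_k$ yields $H(\xi)\le H(\eta)+\sum_k H(\zeta_k)<\infty$, while $\xi\succeq\eta$ ensures that $\xi$ is a generator. For the lower bound: whenever $n\ge N_k$, the refinement $\bigvee_{j=1}^{n}T^{j}\zeta_k$ separates the $N_k L_k$ ``(level, column)'' atoms inside $R_k$, each of measure $\approx 1/(N_k L_k)$, so
$$H\Bigl(\bigvee_{j=1}^{n}T^{j}\zeta_k\Bigr)\ge (1-2^{-k})\log(N_k L_k)-O(2^{-k})\ge b_{N_k}-o(1).$$
Since $\bigvee_{j=1}^{n}T^{j}\xi$ refines $\bigvee_{j=1}^{n}T^{j}\zeta_k$, the same bound holds. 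Hence for $N_k\le n<N_{k+1}$, $H(\bigvee_{j=1}^{n}T^{j}\xi)\ge b_{N_k}\ge b_n/2$, and therefore $H(\bigvee_{j=1}^{n}T^{j}\xi)/c_n\ge b_n/(2c_n)\to\infty$ as $n\to\infty$.

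\textbf{Main obstacle.} The delicate part is the combined bookkeeping on $\{N_k\}$: it must be sparse enough for $\sum(\log N_k+b_{N_k})/N_k<\infty$ (so $H(\xi)<\infty$), yet dense enough that every scale $n$ is comparable to some $N_k$ with $b_{N_k}\ge b_n/2$ (so the tower-$k$ contribution beats $c_n$). It is exactly the hypothesis $c_n=o(n)$ which opens the gap $c_n\ll b_n\ll n$ that makes both requirements simultaneously achievable. A secondary technicality is that the towers $R_k$ are not pairwise disjoint, but the outer atom $X\setminus R_k$ has measure at most $2^{-k}$ and contributes only $O(2^{-k}\log(\#\text{atoms}))=o(1)$ to the entropy error, which is absorbed into the $o(1)$ terms above.
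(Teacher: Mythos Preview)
The paper does not actually prove this proposition; it is stated as a ``simple fact'' and then used to motivate the passage from $\sup$-metrics to average metrics. So there is no argument in the paper to compare with, and your attempt must be judged on its own.

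Your Rokhlin-tower strategy is the right idea, and the entropy estimates for a single tower are fine, but there is a real gap in the parameter bookkeeping. You claim one can choose $N_1<N_2<\cdots$ so that simultaneously
\[
\sum_k \frac{\log N_k + b_{N_k}}{N_k}<\infty
\qquad\text{and}\qquad
b_n\le 2\,b_{N_k}\ \text{for all }n\in[N_k,N_{k+1}),
\]
and you wave this off with ``adding intermediate scales is harmless.'' It is not harmless: the doubling condition pins the $N_k$ to the scales where $b$ doubles, so $b_{N_k}\asymp 2^k$ and $N_k\asymp b^{-1}(2^k)$; summability then amounts (up to constants) to $\int b(n)\,n^{-2}\,dn<\infty$. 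Now take $c_n=n/(\log n\,\log\log n)$. Your explicit choice gives $b_n\asymp n/\log n$, the doubling scales are $N_k\asymp 2^k k$, and $\sum b_{N_k}/N_k\asymp\sum 1/k$ diverges. Worse, no $b_n$ with $b_n/c_n\to\infty$ can rescue this: since already $\int c_n\,n^{-2}\,dn=\int dn/(n\log n\,\log\log n)=\infty$, any $b_n\gg c_n$ forces $\int b_n\,n^{-2}\,dn=\infty$ as well. So for such $c_n$ your two requirements on $\{N_k\}$ are genuinely incompatible.

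The repair is to drop the doubling condition and use, for $n\in[N_k,N_{k+1})$, the \emph{partial} contribution of the next tower $\zeta_{k+1}$: for $n<N_{k+1}$ the join $\bigvee_{j=1}^{n}T^j\zeta_{k+1}$ already separates the $nL_{k+1}$ ``(level, column)'' cells at levels $1,\dots,n$, giving
\[
H\Bigl(\bigvee_{j=1}^{n}T^j\zeta_{k+1}\Bigr)\ \gtrsim\ \frac{n}{N_{k+1}}\log L_{k+1}.
\]
Now choose $N_k\uparrow\infty$ with $N_{k+1}\ge 2N_k$ and $c_n/n<4^{-k}$ for all $n\ge N_k$ (possible since $c_n/n\to 0$), and set $\log L_k=2^{-k}N_k$. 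Then $H(\zeta_k)\le 2^{-k}+(\log N_k)/N_k$ is summable, while for $n\in[N_k,N_{k+1})$ the tower $\zeta_{k+1}$ alone gives
\[
H\Bigl(\bigvee_{j=1}^{n}T^j\xi\Bigr)\ \ge\ \frac{n}{N_{k+1}}\cdot 2^{-(k+1)}N_{k+1}\ =\ 2^{-(k+1)}n,
\qquad
\frac{H}{c_n}\ \ge\ \frac{2^{-(k+1)}n}{4^{-k}n}\ =\ 2^{\,k-1}\to\infty.
\]
With this modification the rest of your argument (finite generator $\eta$, $H(\xi)<\infty$, error from $X\setminus R_k$) goes through as written.
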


This means that the maximum growth of the entropy
$H(\bigvee_{k=1}^{n} T^k\xi)$ either is linear (for automorphisms with
positive entropy), or, in the case it is sublinear, it is arbitrarily close to
linear for every automorphism. Thus we obtain no new information.

The metric corresponding to the supremum (product)
$\xi_n=\bigvee_{k=1}^n T^k\xi$ of partitions is the supremum of the shifted metrics:
${\bar \rho}_n^T(x,y)=\sup_{0\le k<n}\rho (T^k x,T^k y)$.
Hence, following our plan, we can use
the $\varepsilon$-entropy of the metric
${\bar \rho}_n^T(x,y)$ instead of the entropy of the partition
$\xi_n$ itself. Then, using the definitions from Section~4, for a given
metric $\rho$ we can introduce an analog of the function
${\cal H}_{\rho,\varepsilon}(T)$ with the metric $\hat\rho_n$ replaced
by $\bar \rho_n$:
$${ \cal \bar H}_{\rho,\varepsilon}(T)= \Bigl\{\{c_n\}: 0<\liminf_{n\to \infty}\frac{H_{\bar\rho_n,\varepsilon}(\mu)}{c_n}\leq
\limsup_{n\to \infty}\frac{H_{\bar \rho_n,\varepsilon}(\mu)}{c_n}
<\infty\Bigr\}.$$

In this way we define the class of {\it $\sup$-scaling sequences}
$\bar c_n$ for a given metric $\rho$. This also allows us to extend
the classical entropy theory following the above scheme. Though it
is somewhat easier to deal with the sup-metric than with the average
metric, the former is much less useful than the latter.
The metric  $\bar \rho$ more often happens to be constant for an
automorphism with discrete
spectrum, while, as we have seen,   $\hat \rho$ is always admissible if
the original metric is bounded. Let us illustrate the important
difference between the operations of taking the average and
supremum metrics by the following example.

\medskip\noindent
\textbf{Example.} Let $T$ be an irrational rotation of the unit circle,
and let $\rho$ be the semi-metric corresponding to
a generating two-block partition (i.e., a partition into
two sets of positive measure); the semi-metric
$\rho$ is $T$-admissible, hence, as we have seen,
 $\hat \rho$ is an invariant admissible metric. At the same time,
$\bar \rho$ is the constant metric. This means that the scaling sequence
$c_n$ is bounded, but
$\bar c_n$ (the scaling sequence for the sup-metric) is not; namely,
we have
 $\bar c_n\sim \ln n$. Thus the difference manifests itself even in the
 case of a discrete spectrum.

\medskip
Does in make sense to use intermediate averages, e.g., the
$l^p$-averages
$$\lim\Bigl[\frac{1}{n}\sum_{k=0}^{n-1}\rho(T^k x,T^k y)^p\Bigr]^{\frac{1}{p}}=\hat \rho^p(x,y)$$
 for $p\in (1,\infty)$, instead of $l^1$?
Apparently, they do not lead to any new effects: these metrics behave in the same
way as the $l^1$-average metric. For instance, in the
above example, the $p$-average of the
metric determined by a two-block partition of the
unit circle is
 $\hat \rho^p(x,y)=\{m[A\Delta (A+x-y)]\}^{\frac{1}{p}}$.
For $p=\infty$ (i.e., the sup-metric), the picture is completely
different, as in other interpolation theories.
Thus in entropy theory, the use of average metrics substantially supplements
the classical considerations.

\subsection{Application of the discreteness criterion}

As noted above, the problem of determining whether or not the spectrum
of an automorphism is discrete,
is not at all simple. Theorems~5--7 provide convenient non-spectral
criteria for checking that the spectrum is not purely discrete; for
this, one should bound the entropy from below for one admissible
metric satisfying the conditions of Section~2 by a sequence that grows
arbitrarily slow with $n$.

One of the intriguing examples of automorphisms for which the
discreteness of the spectrum has not been neither proved nor disproved since
the 1980s is the Pascal automorphism. It was introduced by the author in
1980 (see \cite{V1,V2}) as an example of an {\it adic transformation},
and is defined as a natural transformation in the space of paths in the Pascal
graph regarded as a Bratteli--Vershik diagram with lexicographic
ordering of paths. One can give a short combinatorial description of
this transformation by encoding these paths with sequences of zeros and
ones and identifying the space of paths with the compact space
$X=\{0;1\}^{\infty}=\textbf{Z}_2$. Then the Pascal automorphism is
defined by the formula
$$T(\{1^i0^j1**\}=\{0^{j-1}1^{i+1}0**\});$$
here $i\geq 0$, $j>0$, and the domain of $T$ and $T^{-1}$ is the whole
$X$ except for the countable set of sequences having
finitely many zeros or ones. The most natural metric on $X$ is the
2-adic metric
$\rho(\{x_k\},\{y_k\})=2^{-n}$, where $n$ is the first digit with
$x_k\ne y_k$. This metric is admissible, and the Pascal automorphism satisfies
the Lipschitz condition almost everywhere. The orbits of this
automorphism coincide with the orbits of the action of the infinite
symmetric group. The Bernoulli measures are $T$-invariant. The
spectrum of the Pascal transformation was studied in the papers
\cite{Pe1,Pe2,Me,Rue}, where some interesting properties
were established  (e.g., it was proved that $T$ is loosely
Bernoulli, the complexity of $T$ was computed, etc.),
but the question about the type of
the spectrum  remains open. \footnote{{\it Note on the translation:} The answer is known now ---
the spectrum of Pascal automorphism is continuous.} In
\cite{V3,V4} it was conjectured that the study of the behavior of scaling
sequences may turn to be useful. The corresponding plan was carried out in
\cite{MM}, but in that paper a logarithmic lower bound was obtained on the
scaling sequence for the sup-metric, and not for the average metric;
this is not sufficient for the conclusion that the spectrum is not
discrete. Nevertheless, one may hope that the combinatorics developed
in \cite{MM} will help to prove that the scaling sequence is unbounded
also for the $\varepsilon$-entropy of the average metric, which, by
our theorem, would imply that the spectrum is not discrete. There are
many adic transformations similar to the Pascal automorphism for which
the same question is also of great interest. For example, if we replace the
Pascal graph with its multidimensional analog or the Young graph, we
will obtain automorphisms that supposedly have continuous spectra. As
observed above, in order to prove that there are no nontrivial eigenfunctions,
one should obtain a growing lower bound on the scaling sequence not
for one, but for all (or for some representative set of) bounded
admissible (semi)metrics.

\subsection{The dynamics of metrics}

Recall that the general approach that consists in studying the asymptotic behavior of
metrics is not exhausted by considering the asymptotics of the
$\varepsilon$-entropy of the average or supremum metric, i.e., does not
reduce to studying the growth  of scaling sequences; this is only
its simplest version. In fact, we consider the original measure space
$(X,{\frak A},\mu)$ with an action of an automorphism
$T$ (or a group of automorphisms $G$), fix an appropriate metric
$\rho$, and study the sequence of metric triples
$$(X,\rho^T_n,\mu), \quad\mbox{where } \rho_n^T(x,y)=\frac{1}{n}\sum_{k=0}^{n-1}\rho(T^k x,T^k y).
$$

The conjecture is that, for a fixed measure and a fixed automorphism (or
group of automorphisms), the asymptotic properties of this sequence of
metric triples do not depend (or weakly depend) on the choice of an individual
admissible metric from a wide class. These properties include not only
the scaling entropy, but also more complicated characteristics of the
sequence, say the mutual properties of several consecutive metric
triples. Since the classification of metric triples up to
measure-preserving isometry is known (see
\cite{Gr,VU}), one may hope to apply it to this problem. In this field
there are many traditional and nontraditional questions. For example,
what is the distribution of the fluctuations of the sequence of
average metrics, regarded as functions of two
variables on  $(X\times X,\mu\times \mu)$, as they converge to the constant metric (for weakly
mixing transformations, e.g., $K$-automorphisms)? What can be said about the
asymptotic properties of neighboring pairs
of metric triples (with indices $n$ and $n+1$)? Etc.

In conclusion, it is worth mentioning that the concept of scaling entropy
appeared in connection with the classification of filtrations
in \cite{V6} and was used in
\cite{VG}. In terms of the present paper, the scaling entropy for filtrations, i.e., decreasing
sequences of measurable partitions or $\sigma$-algebras, is the
scaling entropy for an action of a locally finite group such as
$\sum {\Bbb Z}/2$ instead of an action of $\Bbb Z$ considered here.
The definitions we have given for an action of
$\Bbb Z$ essentially coincide with those given in
\cite{V6} for locally finite groups.

\end{document}